\newtheorem{hypo}{Hypothesis}
\newtheorem{prop}[hypo]{Proposition}
\newtheorem{thm}[hypo]{Theorem}
\newtheorem{lem}[hypo]{Lemma}
\newtheorem{defi}[hypo]{Definition}
\newtheorem{rqe}[hypo]{Remark}
\newtheorem{coro}[hypo]{Corollary}
\newtheorem{exa}[hypo]{Example}
\DeclareMathOperator{\rank}{rank}
\DeclareMathOperator{\class}{class}
\DeclareMathOperator{\val}{val}
\DeclareMathOperator{\sing}{Sing}
\DeclareMathOperator{\Res}{Res}
\DeclareMathOperator{\vect}{Vect}
\def\F{\mathcal{F}}
\title{On the Halphen transform of algebraic space curves}
\date\today
\author{Alfrederic Josse}
\address{Universit\'e de Brest,
Laboratoire de Math\'ematiques de Bretagne Atlantique, UMR CNRS 6205, 29238 Brest cedex, France}
\email{alfrederic.josse@univ-brest.fr}
\author{Fran\c{c}oise P\`ene}
\address{Universit\'e de Brest and Institut Universitaire de France,
Laboratoire de Math\'ematiques de Bretagne Atlantique, UMR CNRS 6205, 29238 Brest cedex, France}
\email{francoise.pene@univ-brest.fr}
\subjclass[2000]{14J99,14H50,14E05,14N05,14N10}
\keywords{Halphen's tranform, space curve, desingularization, degree}
\begin{document}
\begin{abstract}
The Halphen transform of a plane curve is the curve obtained by intersecting 
the tangent lines of the curve with the corresponding polar lines with respect
to some conic. This transform has been introduced by Halphen
as a branch desingularization method in \cite{Halphen1} 
and has also been studied in
\cite{Coolidge,fred}. We extend this notion to Halphen transform of a space curve and study several of its properties (birationality, degree, rank, class, desingularization).
\end{abstract}
\maketitle
\section{Introduction}

In \cite{Halphen1}, Halphen studied plane curve transformations based on a simple geometric construction. Given a plane curve $\mathcal C$, choosing a conic $\mathcal K$ in the same plane, every nonsingular point $m$ of $\mathcal C$ is mapped
on the intersection of the tangent line $\mathcal T_m\mathcal C$ to $\mathcal C$ at $m$ with
the polar  of $m$ with respect to $\mathcal K$. The Halphen transform of $\mathcal C$
with respect to $\mathcal K$ is the Zariski closure of the image of $\mathcal C$ by this transformation. 
It is clear that this transformation separates branches at any multiple point of $\mathcal C$ with distinct tangents.
Even more, in \cite{Halphen1}, Halphen shew namely that iterations of these transformations provide a branch desingularization process. 
In \cite{Coolidge}, Coolidge mentionned some properties of Halphen transforms.
Further properties of these transforms have been carefully studied by Josse in \cite{fred}. 

In the present work, we extend the construction of Halphen transforms to curves of the three dimensional complex projective space $\mathbb P^3$.
We show that these transformations can be used for desingularization. We study
the birationality of the Halphen transformation. We establish also formulas
for the degree, the rank and the class of Halphen transforms.
The study of these space Halphen transforms is much more complicated
than the original one for several reasons: these transforms don't act on a hypersurface,
all space curves are not complete intersection of two hypersurfaces, the local  parametrization of their branches is more complicated than for space curves (for which it is 
simply given by a single Puiseux expansion \cite{Halphen,Wall}), etc.

Let $\mathbb P^3:=\mathbb P(\mathbf W)$ where $\mathbf W$ is a four dimensional complex vector space. 
Let $\mathcal C$ be an algebraic curve of $\mathbb P^3$ and 
$\mathcal Q$ be an irreducible quadric of $\mathbb P^3$. 
For $m\in\mathcal C$, we write $\Phi_{\mathcal C,\mathcal Q}(m)$ for the intersection
point of the tangent line $\mathcal T_m\mathcal C$ with the polar plane 
$\delta_m\mathcal Q$ of $\mathcal Q$ with respect to $m\in\mathcal C$
(when this point is well defined).
The {\bf Halphen transform} $\mathcal C^{\mathcal Q}$ of $\mathcal C$
with respect to $ \mathcal Q$
is the Zariski closure of the image of $\mathcal C$ by
$\Phi_{\mathcal C,\mathcal Q}$.
As for plane curves, by definition, this transformation separates branches of nodes of $\mathcal C$.
Recall that the degree of $\mathcal C$ corresponds to the
number of intersection points of $\mathcal C$ with a generic plane
of $\mathbb P^3$, that its rank corresponds to the number
of its tangent lines intersecting a generic line of $\mathbb P^3$ and
that its class is the number of its osculating planes passing through a generic
point of $\mathbb P^3$. We also write $g(\mathcal C)$ for the (geometric)
genus of $\mathcal C$.
\begin{thm}[Numerical characters of Halphen transform]\label{THMDEGRE}
Let $\mathcal C\subset \mathbb P^3$ be an irreducible curve. For a generic
for a generic quadric $\mathcal Q\subset\mathbb P^3$, the Halphen map
$\Phi_{\mathcal C,\mathcal Q}$ is birational, so the Halphen transform preserves the genus, i.e.
\begin{equation}\label{genus}
g(\mathcal C^{\mathcal Q})=g(\mathcal C)\, .
\end{equation}
Moreover, for a generic quadric $\mathcal Q\subset\mathbb P^3$,
the degree, the rank and the class of $\mathcal C^\mathcal Q$ are given by the following formulas
\begin{equation}\label{degre+classe}
\deg\mathcal C^{\mathcal Q}=\deg\mathcal C+\rank \mathcal C\, ,
\end{equation}
\begin{equation}\label{rankHalphen}
\rank\mathcal C^{\mathcal Q}=2(\deg\mathcal C+\rank \mathcal C+g(\mathcal C)-1)-k_0(\mathcal C^\mathcal Q)\,
\end{equation}
and
\begin{equation}\label{classeHalphen}
\class\mathcal C^{\mathcal Q}=3\, \deg\mathcal C+3\, \rank \mathcal C+6\, g(\mathcal C)-6-2\, k_0(\mathcal C^\mathcal Q)-k_1(\mathcal C^\mathcal Q)\, ,
\end{equation}
where $k_i(\mathcal C^\mathcal Q)$ is the $i$-th stationary index of $\mathcal C^\mathcal Q$.
\end{thm}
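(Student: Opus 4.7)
The plan is to proceed in three steps---birationality of $\Phi_{\mathcal C,\mathcal Q}$ (which immediately gives genus preservation), the degree formula, and then the rank and class formulas via Plücker applied to $\mathcal C^{\mathcal Q}$. Throughout I will work on the normalization $\tilde{\mathcal C}$ with lift $f : \tilde{\mathcal C} \to \mathbf W$ of the parametrization, and use the explicit Halphen lift
\[
\Psi_{\mathcal Q}(f) \;=\; 2Q(f)\, f' \;-\; Q(f)'\, f,
\]
where $Q$ is the quadratic form defining $\mathcal Q$. This formula is the workhorse of the proof.

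For birationality, I would use an incidence-variety dimension count. If $\Phi_{\mathcal C,\mathcal Q}(m) = \Phi_{\mathcal C,\mathcal Q}(m')$ for distinct $m, m' \in \mathcal C$, then the symmetry of the polar form associated to $\mathcal Q$ forces the tangent lines $\mathcal T_m\mathcal C$ and $\mathcal T_{m'}\mathcal C$ to meet at a point $p$ that is conjugate to both $m$ and $m'$ with respect to $\mathcal Q$. The pairs of distinct points of $\mathcal C$ with meeting tangent lines form a $1$-dimensional subvariety of $\mathcal C \times \mathcal C$, and each such pair imposes two independent linear conditions on $\mathcal Q \in \mathbb P^9$. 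The locus of ``bad'' quadrics is therefore at most $1 + (9-2) = 8$-dimensional; generic $\mathcal Q$ avoids it, so $\Phi_{\mathcal C,\mathcal Q}$ is birational and $g(\mathcal C^{\mathcal Q}) = g(\mathcal C)$.

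For the degree, pick a generic hyperplane $H : \ell = 0$ and study the meromorphic $1$-form
\[
\omega \;=\; d\!\left(\frac{\ell(f)^2}{Q(f)}\right) \;=\; \frac{\ell(f)\,\ell(\Psi_{\mathcal Q}(f))}{Q(f)^2}\,dt
\]
on $\tilde{\mathcal C}$, whose degree is $2g - 2$. A local analysis identifies $Z(\omega) = d + \deg \mathcal C^{\mathcal Q} + k_0(\mathcal C)$ (respectively: simple zeros at the $d$ points of $\phi^{-1}(H)$ from the factor $\ell(f)$; $\deg \mathcal C^{\mathcal Q}$ zeros from $\ell(\Psi_{\mathcal Q}(f))$ counting preimages of $\mathcal C^{\mathcal Q} \cap H$ via birationality; and one extra simple zero at each cusp of $\mathcal C$, where $\Psi_{\mathcal Q}(f)$ vanishes to first order) and $P(\omega) = 4d$ (double poles at the $2d$ points of $\phi^{-1}(\mathcal Q)$). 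Equating $Z(\omega) - P(\omega) = 2g - 2$ yields $\deg \mathcal C^{\mathcal Q} = 3d + 2g - 2 - k_0(\mathcal C) = \deg\mathcal C + \rank \mathcal C$, via the Plücker formula $\rank \mathcal C = 2d + 2g - 2 - k_0(\mathcal C)$. The rank and class formulas then follow by applying the space-curve Plücker formulas $\rank = 2\deg + 2g - 2 - k_0$ and $\class = 3\deg + 6g - 6 - 2 k_0 - k_1$ to $\mathcal C^{\mathcal Q}$ itself, with $\deg \mathcal C^{\mathcal Q} = d + r$ and $g(\mathcal C^{\mathcal Q}) = g$.

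The main obstacle I expect is the careful accounting of local contributions to $\omega$: in particular, verifying via Puiseux expansions that each cusp of $\mathcal C$ contributes exactly one extra simple zero and no other anomaly, and checking that for generic $\mathcal Q$ the $2d$ intersection points of $\mathcal C \cap \mathcal Q$ are transverse and disjoint from the cusps, nodes and hyperplane $H$. Once this local bookkeeping is settled, the global degree formula---and hence the complete theorem---falls out.
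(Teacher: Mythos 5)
Your proposal is correct in outline, and it reaches the theorem by a genuinely different route on both of the nontrivial ingredients. For birationality, the paper fixes a point $m_0$, writes the fibre condition $\Phi_{\mathcal C,\mathcal Q}(m')=\Phi_{\mathcal C,\mathcal Q}(m_0)$ as an explicit system, eliminates with resultants to trap the bad quadrics in a hypersurface $\mathcal K_{m_0}\subset\mathbb P^9$ of uniformly bounded degree, and concludes by a union-of-bounded-degree-hypersurfaces argument; your incidence count ($\dim\le 1+7=8<9$, using that a pair $(m,m')$ with concurrent tangent lines meeting at $p$ imposes the two independent linear conditions $\mathfrak b_Q(p,m)=\mathfrak b_Q(p,m')=0$ on $Q$, independent precisely because $m\ne m'$) is cleaner, but it requires $\mathcal C$ non-planar and not a line: for a plane curve all tangent lines pairwise meet, the incidence locus is $2$-dimensional, and the count gives $9$, so that case must be treated separately (it reduces to the plane Halphen transform with respect to the conic $\mathcal Q\cap\mathcal H$). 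For the degree, the paper embeds $\mathcal C$ in a complete intersection $\mathcal C_1=V(F,G)$, computes both $\deg\mathcal C^{\mathcal Q}$ and $\rank\mathcal C$ as $\deg\mathcal C\,(\deg F+\deg G-1)$, resp.\ $\deg\mathcal C\,(\deg F+\deg G-2)$, minus the \emph{same} polar-surface corrections at $\mathcal C\cap\sing\mathcal C_1$, and subtracts; your exact form $\omega=d\bigl(\ell(f)^2/Q(f)\bigr)$ on the normalization yields $\deg\mathcal C^{\mathcal Q}=3d+2g-2-k_0(\mathcal C)$ intrinsically, with no auxiliary $(F,G)$ and no valuation computations at $\sing\mathcal C_1$, and is then converted to $d+\rank\mathcal C$ via Piene's rank formula. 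Your route buys independence from the choice of complete intersection; the paper's buys the identity $\deg\mathcal C^{\mathcal Q}-\rank\mathcal C=\deg\mathcal C$ without invoking Piene at that stage. The last step (Piene's formulas applied to $\mathcal C^{\mathcal Q}$ with $\deg\mathcal C^{\mathcal Q}=d+r$ and the genus preserved) coincides with the paper's.

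Two points of bookkeeping should be made precise. First, at a branch of multiplicity $e$ (not on $\mathcal Q$) the four coordinates of $\Psi_{\mathcal Q}(f)=\mathfrak b_Q(f,f')f-Q(f)f'$ vanish to common order exactly $e-1$ for generic $\mathcal Q$ (the last coordinate is $\mathfrak b_Q(f,f')$, of valuation $e-1$; compare the computation $\val\theta_4=e-1$ in the proof of Proposition \ref{desing}), not merely ``to first order''; your total $k_0(\mathcal C)=\sum_{\mathcal B}(e_{\mathcal B}-1)$ is nevertheless the correct base-locus contribution, so only the prose is off. Second, one must check that the zero and pole divisors of $\omega$ do not interact: at the $2d$ transverse points of $\mathcal C\cap\mathcal Q$ one has $\Psi_{\mathcal Q}(f)\propto f$ with $\ell(f)\ne 0$, and at the $d$ points of $\mathcal C\cap V(\ell)$ one has $\ell(\Psi_{\mathcal Q}(f))=-Q(f)\,\ell(f')\ne 0$ generically. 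With these checks, $2g-2=d+\deg\mathcal C^{\mathcal Q}+k_0(\mathcal C)-4d$ is correct and reproduces the values in all four worked examples of the paper.
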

Let us indicate that \eqref{rankHalphen} and \eqref{classeHalphen} follow directly
from \eqref{genus} and \eqref{degre+classe} thanks to formulas established by 
Piene in \cite{Piene} and recalled in Section \ref{Piene}.
Let us precise that $k_0(\mathcal C)$ corresponds to the number of cusps of $\mathcal C$ (with their multiplicities) and  that $k_1(\mathcal C)$ corresponds to the number of inflection points of $\mathcal C$ (with their multiplicities). The precise definitions of these quantities are recalled in Section \ref{Piene}.
\begin{rqe}
Let $\mathcal C$ be an irreducible algebraic curve of $\mathbb P^3$.

Proposition \ref{desing} below gives the type of the branch $\Phi_{\mathcal C,\mathcal Q}(\mathcal B)$ of $\mathcal C^\mathcal Q$ for any type of branch $\mathcal B$ of $\mathcal C$ for a generic quadric $\mathcal Q\subset\mathbb P^3$. This precise result has several consequences.

First, it enables the computation of the indices $k_0(\mathcal C^\mathcal Q)$ and $k_1(\mathcal C^\mathcal Q)$
for a generic quadric $\mathcal Q\subset\mathbb P^3$, given the type
of the singular branches and of the smooth inflectional branches of $\mathcal C$.

Second, it ensures that the Halphen transform (for a generic quadric $\mathcal Q$)
decreases the order of contact of non singular inflectional branches of $\mathcal C$ with their
tangent lines
and that, except in very special cases, it also decreases the singularities of the singular branches of $\mathcal C$ (see Corollary \ref{desingularisation}).
\end{rqe}

In Section \ref{sec:polar}, we recall some facts on tangent curves, polar surfaces, rank and tangent developable.
In Section \ref{sec:defi}, we detail the construction of the Halphen transform of a space curve.  
In Section \ref{sec:degre}, we prove the degree formula. 
In Section \ref{sec:desing}, we see that the Halphen transform can be used as a branch desingularization method. In Section \ref{sec:rankclass} we prove the rank and class formulas and illustrate them on examples. In Section \ref{sec:birat}, we prove the birationality of the Halphen map $\Phi_{\mathcal C,\mathcal Q}$
for a generic quadric $\mathcal Q$. 

\section{Recalls on tangency and rank}\label{sec:polar}
Let us recall that the tangent plane $\mathcal T_m\mathcal S$ 
to an algebraic surface $\mathcal S=V(\chi)\subset\mathbb P^3$ (with $\chi\in Sym_d\mathbf W^\vee$)
at a nonsingular\footnote{nonsingular means here that
$d\chi(m)\in \mathbf W^\vee$ is non null.} point $m$ of $\mathcal S$ is the plane 
$\mathcal T_m\mathcal S=V(d\chi(\mathbf{m}))\subset\mathbb P^3$, where $\mathbf m\in\mathbf W\setminus\{\mathbf{0}\}$ is a representant of $m\in\mathbb P^3$ and where $d\chi(\mathbf{m})\in\mathbf W^\vee$
is the differential of $\chi$ at $\mathbf{m}$.

Analogously, given an algebraic curve $\mathcal C=\bigcap_{i=1}^I\mathcal S_i$,
where $\mathcal S_i=V(\chi^{(i)})\subset\mathbb P^3$
are surfaces (with $I\ge 2$ and $\chi^{(i)}\in Sym_{d_i}\mathbf W^\vee$), 
the tangent line $\mathcal T_m\mathcal C$ to $\mathcal C$
at a nonsingular\footnote{nonsingular means here that
$\vect(d\chi^{(1)}(m),..., d\chi^{(I)})(m))\subset \mathbf W^\vee$ has dimension 2.}
point $m$ of $\mathcal C$ is the line 
$\mathcal T_{m}\mathcal C=\bigcap_{i=1}^I V(d\chi^{(i)}(\mathbf m))\subset \mathbb P^3$.

In practice, we will use projective coordinates (by fixing
a vector basis $(\mathbf e_1,\mathbf e_2,\mathbf e_3,\mathbf e_4)$ of $\mathbf W$).
We represent each point $m$ of $\mathbb P^3$  
by its coordinates $[x:y:z:t]$ and we write $\mathbf m=(x,y,z,t)\in\mathbf W\setminus\{\mathbf{0}\}$. 
In coordinates, we identify $\chi\in Sym_d\mathbf W^\vee$
with an homogeneous polynomial  $F\in\mathbb C[x,y,z,t]$ of degree $d$ and we write as usual $F_x$, $F_y$, $F_z$
and $F_t$ for its partial derivatives.
We also identify $d\chi(\mathbf m)$ with the gradient
$\nabla F(\mathbf m)=[F_x(\mathbf m):F_y(\mathbf m):F_z(\mathbf m):F_t(\mathbf m)]$.
Let $\mathcal H^\infty:=V(t)$ be the plane at infinity.
\subsection{Tangent curve}
We write $\mathbb G(1,3)$ for the set of projective lines of $\mathbb P^3$.
As usual we write $\mathbb T\mathcal C \subset\mathbb G(1,3)$ for the {\bf tangent curve}
of $\mathcal C$, that is the Zariski closure of the
set of tangent lines to $\mathcal C$  (see for example \cite[p. 188]{Harris}).
We recall that $\mathbb G(1,3)$  is embedded in $\mathbb P^5$
via the Pl\"ucker embedding.  

Up to a linear change of variables, we assume that $\mathcal C\not\subset\mathcal H^\infty$.
If $\mathcal C$ is contained in a curve $\mathcal C_1=V(F,G)\subset \mathbb P^3$ 
with $F,G\in\mathbb C[x,y,z,t]$ homogeneous.
Then, for any $m\in\mathcal C\setminus(\sing(\mathcal C_1)\cup \mathcal H^\infty)$, the tangent line $\mathcal T_{m}\mathcal C=\mathcal T_{m}\mathcal C_1$ is the line of $\mathbb P^3$ passing through the points $m$ and $\mathfrak t_{\infty,\mathcal C_1}(m)\in\mathbb P^3$, where $\mathfrak t_{\infty,\mathcal C_1}(m)$ is the point of $\mathbb P^3$ with coordinates
$$\boldsymbol{\mathfrak t}_{\infty,\mathcal C_1}(\mathbf{m}):=\left(\begin{array}{c}F_y(\mathbf{m})G_z(\mathbf{m})-F_z(\mathbf{m})G_y(\mathbf{m})\\
F_z(\mathbf{m})G_x(\mathbf{m})-F_x(\mathbf{m})G_z(\mathbf{m})\\
F_x(\mathbf{m})G_y(\mathbf{m})-F_y(\mathbf{m})G_x(\mathbf{m})\\
0\end{array}\right)\in\mathbb C^4.$$
Via the Pl\"ucker embedding \cite{Eisenbud-Harris}, $\mathcal T_m\mathcal C$ is identified with
$\lambda_{\mathcal C_1}(m)$ with $\lambda_{\mathcal C_1}:\mathbb P^3\dashrightarrow\mathbb P( \bigwedge\limits^{2}\mathbf W)\cong\mathbb P^5$ given on coordinates by
\footnote{we recall that where, in coordinates, $\bigwedge\limits^{2}(\mathbf{u}\ \mathbf{v})=
    \left(\begin{array}{c}u_1v_2-u_2v_1\\ u_1v_3-u_3v_1\\
    u_1v_4-u_4v_1\\ u_2v_3-u_3v_2\\ u_2v_4-u_4v_2\\u_3v_4-
    u_4v_3\end{array}\right)\in\mathbb C^6$ for any $\mathbf u,\ \mathbf v\in\mathbf W$.}
$$
\boldsymbol{\lambda}_{\mathcal C_1}(\mathbf m)=\left( \bigwedge\limits^{2}(\mathbf m\  {\mathfrak t}_{\infty,\mathcal C_1}(\mathbf{m}))\right)\in\mathbb C^6$$
and so $\mathbb T\mathcal C$ is identified to $\overline{\lambda_{\mathcal C_1}(\mathcal C)}$.
Due to the Euler Formula applied to $F$ and $G$, on $\mathcal C$, we have
\begin{equation}\label{lien}
\boldsymbol{\lambda}_{\mathcal C_1}=-t\left(\begin{array}{c}
   F_zG_t-G_zF_t\\F_tG_y-F_yG_t\\F_yG_z-F_zG_y\\
   F_xG_t-G_xF_t\\F_zG_x-F_xG_z\\F_xG_y-F_yG_x\end{array}\right).
\end{equation}
\subsection{Dual tangent curve}
We write $\mathbb G(1,\mathbb P(\mathbf W^\vee))$ for the set of projective lines
of $\mathbb P(\mathbf W^\vee)$. 
We consider the duality between $\mathbb G(1,3)$ and  $\mathbb G(1,\mathbb P(\mathbf W^\vee))$ which, to $\mathcal L\in\mathbb G(1,3)$, associates
$\mathcal L^*=\{\varphi\in\mathbb P(\mathbf W^\vee)\ :\ \mathcal L\subset V(\varphi)\subset\mathbb P^3\}$. It is then natural to consider the {\bf dual tangent curve} $(\mathbb T\mathcal C)^*=\{\mathcal L^* : \mathcal L\in\mathbb T\mathcal C\}\subset \mathbb G(1,\mathbb P(\mathbf W^\vee))$.
Observe that $(\mathbb T\mathcal C)^*$ is the Zariski closure of
$\{(\mathcal T_m\mathcal C)^*,\ m\in\mathcal C\setminus\sing(\mathcal C)\}$
and that $(\mathcal T_m\mathcal C)^*$ corresponds to the set of projective planes
of $\mathbb P^3$ containing $\mathcal T_m\mathcal C$.
Via the Pl\"ucker embedding,
$(\mathbb T\mathcal C)^*$ is identified with
the Zariski closure of the image of $\mathcal C$ by the rational map
$\vartheta_{\mathcal C_1}:\mathbb P^3\rightarrow\mathbb P(\bigwedge\limits^{2}\mathbf W^\vee)\cong\mathbb P^5$ given
on coordinates by 
$$\boldsymbol{\vartheta}_{\mathcal C_1}(\mathbf m)= \bigwedge\limits^{2}(\nabla F(\mathbf m)\ 
\nabla G(\mathbf m))\in\mathbb C^6.$$
Comparing $\vartheta_{\mathcal C_1}$ with \eqref{lien}, we conclude that
$\mathbb T\mathcal C$ and $(\mathbb T\mathcal C)^*$ are identified via
Pl\"ucker embeddings up to a linear change of coordinates.
In particular, their images via Pl\"ucker embeddings in $\mathbb P^5$ have same degree. This explains why $\mathbb T\mathcal C$ and $(\mathbb T\mathcal C)^*$
are often considered as the same object.
\subsection{Polar surface}
We call {\bf polar surface} from $\beta\in \left(\bigwedge\limits^{2}\mathbf W^\vee\right)^\vee\cong\mathbb C^6$ of the complete intersection curve 
$\mathcal C_1$ the surface
$V\left(\beta\circ\boldsymbol{\vartheta}_{\mathcal C_1}\right)\subset\mathbb P^3$.
This extends to the three dimensional case the notion of polar curves of plane projective curves. 
\begin{lem}\label{specialpolar}
Let $m$ be a non singular point of $\mathcal C_1$. 
Let $\ell\subset\mathbb P^3$ be a projective line containing
$a,b\in\mathbb P^3$ (with $a\ne b$), then $\mathcal T_m\mathcal C$ intersects the line $\ell$ if and only if
\begin{equation}\label{lienppolairedevelopante}
\left\langle \bigwedge\limits^{2}(\nabla F(\mathbf m)\ \nabla G(\mathbf m)),\bigwedge\limits^{2}(a\ b)\right\rangle=0
\end{equation}
in coordinates.\footnote{with the classical notation
$\langle A,B\rangle=\sum_{i=1}^6a_ib_i$ for any $A=(a_1,...,a_6)$
and $B=(b_1,...,b_6)$ in $\mathbb C^6$.}
\end{lem}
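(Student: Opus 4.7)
The plan is to translate the incidence condition ``$\mathcal T_m\mathcal C\cap\ell\ne\emptyset$'' into the vanishing of a $2\times 2$ determinant, and then to recognize this determinant as the natural pairing between $\bigwedge\limits^{2}\mathbf W^\vee$ and $\bigwedge\limits^{2}\mathbf W$ on the left-hand side of \eqref{lienppolairedevelopante}.

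First I would use the nonsingularity assumption on $m\in\mathcal C_1=V(F,G)$: since $\nabla F(\mathbf m)$ and $\nabla G(\mathbf m)$ are linearly independent in $\mathbf W^\vee$, the tangent line is $\mathcal T_m\mathcal C=\mathcal T_m\mathcal C_1=V(dF(\mathbf m))\cap V(dG(\mathbf m))$. Writing $a,b$ also for representatives in $\mathbf W$ of the points of $\ell$, every point of $\ell$ has the form $\lambda a+\mu b$; such a point lies on $\mathcal T_m\mathcal C$ if and only if $(\lambda,\mu)\ne(0,0)$ and
\[
\begin{pmatrix}\langle\nabla F(\mathbf m),a\rangle&\langle\nabla F(\mathbf m),b\rangle\\ \langle\nabla G(\mathbf m),a\rangle&\langle\nabla G(\mathbf m),b\rangle\end{pmatrix}\begin{pmatrix}\lambda\\ \mu\end{pmatrix}=\begin{pmatrix}0\\ 0\end{pmatrix}.
\]
Hence $\mathcal T_m\mathcal C$ meets $\ell$ if and only if the determinant of this $2\times 2$ matrix vanishes.

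Second, I would expand this determinant and identify it with the wedge pairing. Using the Pl\"ucker ordering of the footnote (equivalently, Cauchy--Binet applied to the two $2\times 4$ matrices with rows $\nabla F(\mathbf m),\nabla G(\mathbf m)$ and $a,b$ respectively) gives
\[
\langle\nabla F(\mathbf m),a\rangle\langle\nabla G(\mathbf m),b\rangle-\langle\nabla F(\mathbf m),b\rangle\langle\nabla G(\mathbf m),a\rangle=\sum_{i<j}\bigl(F_iG_j-F_jG_i\bigr)(\mathbf m)\,(a_ib_j-a_jb_i),
\]
whose right-hand side is, by the very definitions of $\bigwedge\limits^{2}$ and of $\langle\cdot,\cdot\rangle$ on $\mathbb C^6$ recalled in the footnote, nothing but $\bigl\langle\bigwedge\limits^{2}(\nabla F(\mathbf m),\nabla G(\mathbf m)),\,\bigwedge\limits^{2}(a,b)\bigr\rangle$, which gives \eqref{lienppolairedevelopante}.

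I do not expect any serious obstacle: this is the classical Pl\"ucker statement that the incidence of two lines in $\mathbb P^3$, one presented by its two defining hyperplanes and the other by two of its points, is detected by the canonical duality pairing between $\bigwedge\limits^{2}\mathbf W^\vee$ and $\bigwedge\limits^{2}\mathbf W$. The only care needed is matching the sign/ordering conventions of the Pl\"ucker footnote, which is a routine verification.
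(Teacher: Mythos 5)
Your proof is correct and follows essentially the same route as the paper: both reduce the incidence of $\mathcal T_m\mathcal C$ with $\ell$ to the vanishing of the $2\times 2$ determinant $\det\left(\begin{smallmatrix}\langle\nabla F(\mathbf m),a\rangle&\langle\nabla F(\mathbf m),b\rangle\\ \langle\nabla G(\mathbf m),a\rangle&\langle\nabla G(\mathbf m),b\rangle\end{smallmatrix}\right)$ and then identify it with the pairing in \eqref{lienppolairedevelopante}. You merely spell out the Cauchy--Binet expansion that the paper leaves implicit in its final ``which leads to''.
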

\begin{proof}
The fact that  $\mathcal T_m\mathcal C$ intersects the line $\ell$ 
is equivalent to the existence of $(u,v)\in \mathbb C^2\setminus{(0,0)}$ such that $\langle\nabla F(\mathbf m),u\, a+b\, v\rangle=0$ and  $\langle\nabla G(\mathbf m),u\, a+b\, v\rangle=0$, i.e. $\det\left(\begin{array}{cc}
\langle\nabla F(\mathbf m),a\rangle&\langle\nabla F(\mathbf m),b\rangle\\
\langle\nabla G(\mathbf m),a\rangle&\langle\nabla G(\mathbf m),b\rangle\\
\end{array}\right)=0$, which leads to \eqref{lienppolairedevelopante}.
\end{proof}
Observe that the set of $m\in\mathbb P^3$ satisfying \eqref{lienppolairedevelopante} corresponds to 
the polar surfaces from $\beta$ corresponding 
to the Pl\"ucker embedding of $\ell$.
\subsection{Link with the rank}
The rank of $\mathcal C$ is usually described as the degree of the
{\bf tangent developable} surface of $\mathcal C$, i.e. the
Zariski closure of the union of the tangent curves of $\mathcal C$.
We have also the following interpretation of the rank of $\mathcal C$
in terms of the tangent curve.
\begin{lem}
The rank of $\mathcal C$ corresponds to the degree in $\mathbb P^5$ of the Pl\"ucker embedding of $\mathbb T\mathcal C$.
\end{lem}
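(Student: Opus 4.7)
\emph{Plan.} The strategy is to translate the defining geometric count of $\rank\mathcal{C}$ into a hyperplane intersection count on the Pl\"ucker-embedded tangent curve $\mathbb{T}\mathcal{C}\subset\mathbb{P}^5$, using Lemma \ref{specialpolar}.

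By definition, $\rank\mathcal{C}$ is the number of tangent lines of $\mathcal{C}$ meeting a generic line $\ell=\overline{ab}\subset\mathbb{P}^3$. Lemma \ref{specialpolar} rewrites this condition as $\langle\boldsymbol{\vartheta}_{\mathcal{C}_1}(\mathbf m),\bigwedge^{2}(a\ b)\rangle=0$. Via the linear identification between $\boldsymbol{\lambda}_{\mathcal{C}_1}$ and $\boldsymbol{\vartheta}_{\mathcal{C}_1}$ recorded after \eqref{lien}, this is exactly the condition that the Pl\"ucker-image $\lambda_{\mathcal{C}_1}(m)\in\mathbb{P}^5$ lies in a certain hyperplane $H_\ell\subset\mathbb{P}^5$ determined by $\ell$. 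Hence for generic $\ell$,
\[
\rank\mathcal{C}=\#\{m\in\mathcal{C}:\mathcal{T}_m\mathcal{C}\cap\ell\neq\emptyset\}=\#\bigl(\mathbb{T}\mathcal{C}\cap H_\ell\bigr).
\]

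Next, for any irreducible curve $\Gamma\subset\mathbb{P}^5$ and any hyperplane $H\subset\mathbb{P}^5$ not containing $\Gamma$, the intersection $\Gamma\cap H$ is a $0$-cycle of degree $\deg\Gamma$ counted with multiplicities, and the cardinality of its support equals $\deg\Gamma$ precisely when $H$ meets $\Gamma$ transversally. Applying this to $\Gamma=\mathbb{T}\mathcal{C}$ and $H=H_\ell$, the lemma reduces to showing that a generic decomposable hyperplane $H_\ell$ meets $\mathbb{T}\mathcal{C}$ transversally.

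The main obstacle is this transversality. The hyperplanes of the form $H_\ell$ are parameterized by $\ell\in\mathbb{G}(1,3)$ and form an irreducible $4$-dimensional subvariety of $\mathbb{P}^{5\vee}$ (the Pl\"ucker-dual Grassmannian), which is non-degenerate in the sense that it linearly spans $\mathbb{P}^{5\vee}$. The non-transverse locus $\Delta\subset\mathbb{P}^{5\vee}$ is a proper closed subvariety whose components are either hyperplanes of $\mathbb{P}^{5\vee}$ (those through the finitely many singular points of $\mathbb{T}\mathcal{C}$) or the dual variety $(\mathbb{T}\mathcal{C})^*$. By irreducibility, the decomposable hyperplanes either all lie in $\Delta$ or avoid it generically; the first case is ruled out by a direct dimension/non-degeneracy comparison (the non-degenerate $4$-fold $\mathbb{G}(1,3)^*$ cannot be contained in a hyperplane of $\mathbb{P}^{5\vee}$, and its containment in $(\mathbb{T}\mathcal{C})^*$ would force equality of two distinct irreducible hypersurfaces). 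Hence a generic $H_\ell$ is transverse to $\mathbb{T}\mathcal{C}$, and the two counts agree: $\rank\mathcal{C}=\deg\mathbb{T}\mathcal{C}$.
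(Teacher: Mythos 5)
Your proof is correct in outline and shares the paper's first half: both use Lemma \ref{specialpolar} (together with the linear identification coming from \eqref{lien}) to convert the rank into the number of points of the Pl\"ucker-embedded tangent curve lying on the hyperplane $H_\ell$, and both correctly identify the real difficulty, namely that the $H_\ell$ are precisely the tangent hyperplanes of the Klein quadric $\mathcal K=V(x_1x_6-x_2x_5+x_3x_4)$, a proper $4$-dimensional family to which the naive Bertini statement for generic hyperplanes does not apply. Where you genuinely diverge is in how transversality is obtained. The paper passes to the affine chart $x_6=1$, where the sections $\mathcal H_A\cap\mathcal K$ become translates $(a_2,a_3,a_4,a_5)+\mathcal H^{(0)}_{(0,0,0,0)}$ of a fixed quadric of $\mathbb C^4$, so that generic transversality follows from moving a fixed hypersurface by a generic translation; this is explicit but chart-dependent. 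You instead bound the non-transversality locus by the union of the dual hypersurface of $\mathbb T\mathcal C$ with the hyperplanes of $(\mathbb P^5)^\vee$ dual to the finitely many singular points of $\mathbb T\mathcal C$, and argue that the irreducible, linearly non-degenerate quadric of decomposable hyperplanes cannot lie in any single component of that union. This is a valid and more intrinsic route, but it has one soft spot: the claim that containment in the dual variety ``would force equality of two distinct irreducible hypersurfaces'' assumes the distinctness you are trying to use (degree does not decide it, both could be quadrics a priori). The clean repair is biduality in characteristic zero: equality of the two hypersurfaces would give $\mathbb T\mathcal C=\mathcal K$ after dualizing again, a curve equal to a fourfold, which is absurd; and if the dual variety of $\mathbb T\mathcal C$ fails to be a hypersurface, the containment is already impossible by dimension. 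Note also that your notation for that dual hypersurface collides with the paper's $(\mathbb T\mathcal C)^*$, which there denotes the dual tangent \emph{curve}, and that both arguments tacitly use that $m\mapsto\mathcal T_m\mathcal C$ is generically injective so that counting points of $\mathcal C$ and points of $\mathbb T\mathcal C$ on $H_\ell$ amounts to the same thing.
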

\begin{proof}
We work with coordinates.
Recall that the image by the Pl\"ucker embedding of 
$\mathbb G(1,3)$ or $\mathbb G(1,\mathbb P(\mathbf W^\vee))$ is $\mathcal K=V(x_1x_6-x_2x_5+x_3x_4)\subset\mathbb P^5$
if we write $[x_1:...:x_6]$ for the coordinates of a point of $\mathbb P^5$.
Due to Lemma \ref{specialpolar} (and the remark following this lemma), it is enough to prove that for a generic $A=[a_6:-a_5:a_4:a_3:-a_2:a_1]\in\mathcal K$, the hyperplane
$\mathcal H_A=V(\langle A,\cdot\rangle)\subset\mathbb P^5$ intersects 
$\tilde{\mathcal C}:=\overline{\vartheta_{\mathcal C_1}(\mathcal C)}\subset\mathbb P^5$
transversally. Indeed, due to the Bezout theorem, this will imply that
$$\# (\ell_A\cap \mathcal C)
  =\#(\mathcal H_A \cap \overline{\vartheta_{\mathcal C_1}(\mathcal C)})
  = \deg \overline{\vartheta_{\mathcal C_1}(\mathcal C)},$$
where $\ell_A$ is the projective line of $\mathbb P^3$ 
corresponding to $A\in\mathcal K$ via Pl\"ucker embedding. 

Observe that this is not obvious since the hyperplane
$\mathcal H_A$ with $A\in\mathcal K$ corresponds to the
tangent hyperplane to $\mathcal K$ at $[a_1:a_2:a_3:a_4:a_5:a_6]$ and so these hyperplanes are very particular.

We assume that $\mathcal H_A\cap\tilde{\mathcal C}\subset
\vartheta_{\mathcal C_1}(\mathcal C)$. This is true 
for a generic $A\in\mathcal K$ since the set of projective hyperplanes intersecting the finite set of points
$\tilde{\mathcal C}\setminus \vartheta_{\mathcal C_1}(\mathcal C)$
has codimension 2 in the projective space of projective hyperplanes of $\mathbb P^5$.

Now let us prove that $\mathcal H_A$ intersects 
$\tilde{\mathcal C}$ transversally in the open set $\{x_i\ne 0\}$ 
for every $i=1,...,6$. Take for example $i=6$ and assume $a_6\ne 0$. We set $a_6=1$ to simplify. In the chart $x_6=1$,
$\tilde C=\Phi(\mathcal C^{(0)})$ and $\mathcal H_A\cap\mathcal K=\Phi(\mathcal H^{(0)}_{(a_2,a_3,a_4,a_5)})$
where $\Phi(x_2,x_3,x_4,x_5)=[x_2x_5-x_3x_4:x_3:x_4:x_5]$,
where $\mathcal C^{(0)}\subset\mathbb C^4$ corresponds to the image of $\mathcal C$ by the projection $[x_1:x_2:x_3:x_4:x_5:1]\mapsto (x_2,x_3,x_4,x_5)$ and where 
$$\mathcal H^{(0)}_{(a_2,a_3,a_4,a_5)}=
   V\left((x_2-a_2)(x_5-a_5)-(x_3-a_3)(x_4-a_4)\right)
         \subset\mathbb C^4.$$
Indeed $\mathcal H_A\cap\mathcal K=V(x_1- a_5x_2+a_4x_3+a_3x_4-a_2x_5+a_1x_6,x_1x_6-x_2x_3+x_4x_5) $ and so
$ \mathcal H_A\cap\mathcal K=\Phi(\mathcal H_A^{(0)})$
with 
$$\mathcal H_A^{(0)}=V(a_5x_2-a_4x_3-a_3x_4+a_2x_5-a_1-(x_2x_3-x_4x_5)\subset\mathbb C^4,$$
but this corresponds exactly to $\mathcal H^{(0)}_{(a_2,a_3,a_4,a_5)}$ (since $a_1=a_2a_5-a_3a_4$).
Now noticing that $\mathcal C^{(0)}$ is a curve of $\mathbb C^4$
and that $\mathcal H^{(0)}_{(a_2,a_3,a_4,a_5)}
=(a_2,a_3,a_4,a_5)+\mathcal H^{(0)}_{(0,0,0,0)}$.
We conclude that, for a generic $A\in\mathcal K$, 
$\mathcal C^{(0)}$ and 
$\mathcal H^{(0)}_{(a_2,a_3,a_4,a_5)}$ have transverse intersection and so $\tilde{\mathcal C}$ and $\mathcal H_A$
have also transverse intersection outside $V(x_6)$
(since the differential $D\Phi(x_2,x_3,x_4,x_5)$ is injective).
\end{proof}
\section{Halphen transform}\label{sec:defi}
\subsection{Definition}
We recall that the polar $\delta_{m_1}(\mathcal Q)$ of an irreducible quadric $\mathcal Q=V(Q)\subset\mathbb P^3$
with respect to a point $m_1\in\mathbb P^3$ is the projective plane
$V(\Delta_{\mathbf{m}_1}Q)\subset\mathbb P^3$,
with the usual notation $\Delta_{\mathbf{m}_1}Q=x_1Q_x+y_1Q_y+z_1Q_z+t_1Q_t$
if $\mathbf{m}_1=(x_1,y_1,z_1,t_1)\in\mathbf W$. Observe that, given $Q$ as above, for a generic $m_1\in\mathbb P^3$,
$\delta_{m_1}(\mathcal Q)$ is 
the projective plane passing through 
the points of $\mathcal Q$ at which the tangent plane to $\mathcal Q$
contains $m_1$.
\begin{defi}
Let $\mathcal Q=V(Q)\subset\mathbb P^3$ be an irreducible quadric and $\mathcal C\subset\mathbb P^3$ be an irreducible curve. The {\bf Halphen transform} $\mathcal{C}^{\mathcal Q}$ of $\mathcal{C}$ with respect to $\mathcal Q$ is
the Zariski closure of the set of intersection points $\Phi_{\mathcal C,\mathcal Q}(m)$ 
of the tangent line $\mathcal T_m\mathcal C$ to $\mathcal{C}$ at $m$ with
the polar $\delta_m\mathcal Q$ of $\mathcal Q$ with respect to $m$, when $m$ varies on $\mathcal{C}$.
\end{defi}
With this definition, $\Phi_{\mathcal C,\mathcal Q}(m)$ is the unique
conjugated point of $m$, with respect to $\mathcal Q$, belonging to $\mathcal T_m\mathcal C$.
Let $\mathcal C_1=V(F,G)\subset \mathbb P^3$ be a complete
intersection curve containing $\mathcal C$,
we extend the definition of $\Phi_{\mathcal C,\mathcal Q}$ into a rational map $\Phi_{\mathcal C_1,\mathcal Q}:\mathbb P^3\dashrightarrow\mathbb P^3$ given by
\begin{equation}\label{defiphi} 
\Phi _{\mathcal C_1,\mathcal Q}[x:y:z:t]=\left[ \bigwedge\limits^{3}\left( 
\nabla F(x,y,z,t) \ 
\nabla G(x,y,z,t) \  
\nabla Q(x,y,z,t)
\right) \right] ,
\end{equation}
with the classical notation $\bigwedge\limits^{3}M$
for any matrix $M\in Mat_{4\times 3}(\mathbb C)$ given by
$$  \bigwedge\limits^{3}M=\left(\begin{array}{c}
    -\det \tilde M^{(1)}\\ \det \tilde M^{(2)}\\
    -\det \tilde M^{(3)}\\ \det \tilde M^{(4)}
     \end{array}\right)\in\mathbb C^4,$$
where $\tilde M^{(i)}$ is the $3\times 3$ matrix
obtained from the matrix $M$ by deleting the $i$-th line.
\begin{rqe}\label{basepoints}
The points $m\in\mathcal C$ for which the right hand side of
\eqref{defiphi} is not well defined are the singular points of $\mathcal C_1$
contained in $\mathcal C$ and the points $m\in\mathcal C\cap\mathcal Q$
such that $\mathcal T_m\mathcal C\subset \mathcal T_m\mathcal Q$.
\end{rqe}
\subsection{Halphen transform of rational curves}
It will be useful to consider the symmetric bilinear form $\mathfrak b_Q(\cdot,\cdot)$
on $\mathbf W$ associated to $Q(\cdot)$
and given on coordinates by $\mathfrak b_Q(\mathbf{m}_1,\mathbf{m}_2)=[Q(\mathbf{m}_1+\mathbf{m}_2)-Q(\mathbf{m}_1-\mathbf{m}_2)]/4$.

As $\Phi _{\mathcal C,\mathcal Q}(m)$ is in $\mathcal T_m\mathcal C_1$, $\Phi _{\mathcal C,\mathcal Q}(m)=[a\cdot \mathbf{m}+b\cdot \boldsymbol{\mathfrak t}_{\infty,\mathcal C_1}(\mathbf m)]$ for some 
$[a:b]\in\mathbb P^1(\mathbb C)$. Now the fact that $\Phi _{\mathcal C,\mathcal Q}(m)$ is in $\delta_m\mathcal Q$ means that
$\mathfrak b_Q(\mathbf{m},\Phi _{\mathcal C,\mathcal Q}(\mathbf{m}))=0$, i.e.
that $a\, \mathfrak b_Q(\mathbf{m})+b\, \mathfrak b_Q(\mathbf{m},\boldsymbol{\mathfrak t}_{\infty,\mathcal C_1}(\mathbf m))=0$, hence we have proved the following remark.
\begin{rqe}
If $\mathcal C$ is contained in a curve $\mathcal C_1=V(F,G)\subset \mathbb P^3$ and if $m\in\mathcal C\setminus(\sing(\mathcal C_1)\cup \mathcal H^\infty)$, then
$$ \Phi _{\mathcal C,\mathcal Q}(m)=[\mathfrak b_Q(\mathbf{m},\boldsymbol{\mathfrak t}_{\infty,\mathcal C_1}(\mathbf m))\cdot \mathbf{m} - Q(\mathbf{m})\cdot \boldsymbol{\mathfrak t}_{\infty,\mathcal C_1}(\mathbf m)].$$
\end{rqe}
In particular, if $u\mapsto[\alpha(u)]$ is a local parametrization of $\mathcal C$
around a generic $m_0\in\mathcal C$,
then a local parametrization of $\mathcal C^{\mathcal Q}$ 
around $\Phi_{\mathcal C,\mathcal Q}(m_0)$ is
given by
\begin{equation}\label{parametrisation}
\Psi_{\mathcal C,\mathcal Q}:u\mapsto \Phi_{\mathcal C,\mathcal Q}(\alpha(u)) = 
           [\mathfrak b_Q(\alpha(u),\alpha'(u))\cdot \alpha(u) 
      - Q(\alpha(u))\cdot \alpha'(u)].
\end{equation}
Observe that Halphen transforms are preserved by linear isomorphisms.
It is also worth noticing that the only fixed points of $(\Phi_{\mathcal C,\mathcal Q})_{|\mathcal C}$ are in $\mathcal C\cap\mathcal Q$.
\section{Degree of the Halphen transform}\label{sec:degre}
\subsection{Proof of the degree formula of Theorem \ref{THMDEGRE}}
The following result will be proved in Section \ref{sec:birat}.
\begin{prop}\label{birationality}
Let $\mathcal C$ be an irreducible curve of $\mathbb P^3$.
Then, for a generic $\mathcal Q$, the map $(\Phi_{\mathcal C,\mathcal Q})_{|\mathcal C}$ is birational.
\end{prop}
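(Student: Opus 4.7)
The plan is to show that the set of quadrics $\mathcal Q \in \mathbb P(\sym^2 \mathbf W^\vee)$ for which $(\Phi_{\mathcal C,\mathcal Q})_{|\mathcal C}$ fails to be birational forms a proper subvariety of this $9$-dimensional parameter space. To this end, I introduce the incidence variety
\begin{equation*}
\Sigma := \left\{(\mathcal Q, m, m') \in \mathbb P(\sym^2 \mathbf W^\vee) \times \mathcal C \times \mathcal C \ :\ m \ne m',\ \Phi_{\mathcal C,\mathcal Q}(m) = \Phi_{\mathcal C,\mathcal Q}(m')\right\}
\end{equation*}
and argue that its projection to $\mathbb P(\sym^2 \mathbf W^\vee)$ is not dominant. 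The key geometric input is that the equality $\Phi_{\mathcal C,\mathcal Q}(m) = \Phi_{\mathcal C,\mathcal Q}(m')$ forces the tangent lines $\mathcal T_m\mathcal C$ and $\mathcal T_{m'}\mathcal C$ to share a common point $p$; when they are distinct, $p$ is uniquely determined by $(m, m')$, and the two conditions $\mathfrak b_Q(\mathbf m, \mathbf p) = \mathfrak b_Q(\mathbf m', \mathbf p) = 0$ are linear in $\mathcal Q$ and generically impose codimension $2$.

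The heart of the argument is the claim that if $\mathcal C$ is not contained in a plane, the locus $T$ of pairs $(m,m')$ with $m \ne m'$ and distinct meeting tangent lines has dimension at most $1$. Indeed, fixing a generic $m \in \mathcal C$, the set of lines of $\mathbb P^3$ meeting $\mathcal T_m\mathcal C$ is a Schubert divisor in $\mathbb G(1,3)$, and it cannot contain the tangent curve $\mathbb T\mathcal C$ entirely, for otherwise every tangent line of $\mathcal C$ would meet $\mathcal T_m\mathcal C$ and, varying $m$, this would force $\mathcal C$ to be planar. Hence the projection $T \to \mathcal C$, $(m,m') \mapsto m$, has finite fibers over generic $m$, whence $\dim T \le 1$. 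Combining with the codimension-$2$ constraint on $\mathcal Q$ per pair, one obtains $\dim \Sigma \leq 1 + (9-2) = 8 < 9$, which gives the desired non-dominance. Bitangent pairs ($\mathcal T_m = \mathcal T_{m'}$) must also be controlled: for an irreducible non-linear space curve the tangent map $m \mapsto \mathcal T_m\mathcal C$ is birational onto $\mathbb T\mathcal C$, so such pairs form a finite set and contribute a stratum of dimension at most $8$.

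The degenerate situations require separate treatment. If $\mathcal C$ is a line, then $\mathcal T_m\mathcal C = \mathcal C$ for every smooth $m$, and \eqref{parametrisation} specialises to a linear fractional transformation of the parameter of $\mathcal C$, hence birational. If $\mathcal C$ lies in a plane $H \subset \mathbb P^3$, then $\mathcal T_m\mathcal C \subset H$, and a direct computation (substituting the equation of $H$ in the polar equation) shows that $\delta_m\mathcal Q \cap H$ is precisely the polar line of $m$ with respect to the conic $\mathcal Q \cap H$ inside $H$. Thus $\Phi_{\mathcal C,\mathcal Q}(m) = \mathcal T_m\mathcal C \cap \delta_m\mathcal Q$ coincides with the planar Halphen image of $m$ with respect to $\mathcal Q \cap H$; since the restriction $\mathcal Q \mapsto \mathcal Q \cap H$ is surjective from quadrics of $\mathbb P^3$ onto conics of $H$, a generic $\mathcal Q$ produces a generic conic, and birationality follows from the planar case proved in \cite{fred}.

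The main obstacle is the rigorous dimension bound on $T$: excluding the possibility that every tangent line of $\mathcal C$ meets a fixed generic tangent line unless $\mathcal C$ is planar or a line. This can be approached via the description of $\mathbb G(1,3)$ as the Pl\"ucker quadric in $\mathbb P^5$ recalled in Section~\ref{sec:polar}. A related delicate point is the birationality of the tangent map for irreducible non-linear space curves, which prevents bitangent pairs from forming a positive-dimensional family and is essential for the dimension count to close.
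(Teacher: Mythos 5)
Your proposal is correct in outline but follows a genuinely different route from the paper. The paper argues pointwise: it fixes a nonsingular $m_0$, writes the explicit system \eqref{EQUA} that a second point $m_1$ with $\Phi_{\mathcal C,\mathcal Q}(m_1)=\Phi_{\mathcal C,\mathcal Q}(m_0)$ must satisfy, shows that failure for too many quadrics would force $\mathcal C\subset V(F_x,F_t,G_x,G_t)$ and hence $\mathcal C$ to be a line, and then needs a second lemma (via resultants) giving a \emph{uniform} degree bound on the hypersurface $\mathcal K_{m_0}$ of bad quadrics, so that a standard argument from \cite{ajfpCRAS} controls the union over all $m_0$. Your global incidence variety $\Sigma$ with the projection to $\mathcal C\times\mathcal C$ replaces both the uniformity lemma and the resultant computation by a single dimension count, which is cleaner and more conceptual; the codimension-$2$ claim is in fact automatic, since the linear forms $Q\mapsto\mathfrak b_Q(\mathbf m,\mathbf p)$ and $Q\mapsto\mathfrak b_Q(\mathbf m',\mathbf p)$ on $\sym_2\mathbf W^\vee$ correspond to $\mathbf m\odot\mathbf p$ and $\mathbf m'\odot\mathbf p$, which are independent whenever $m\ne m'$. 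Your separate treatment of lines and of planar curves (reduction to the conic $\mathcal Q\cap H$ and to \cite{fred}) is also sound, and is genuinely needed in your setup, whereas the paper's computation absorbs the planar case automatically.

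The price of your route is exactly the two facts you flag at the end, and as written they are asserted rather than proved. First, that a non-planar irreducible curve cannot have all pairs of tangent lines meeting: this is a true classical statement in characteristic zero (pairwise intersecting lines are either concurrent or coplanar; concurrency of all tangents forces a strange curve, hence a line, and coplanarity forces planarity), but the case analysis is not completely trivial and should be written out or referenced precisely. Second, generic injectivity of the tangent map $m\mapsto\mathcal T_m\mathcal C$ for non-linear curves in characteristic zero, which you need so that bitangent pairs form a finite set; note that even over such a pair you still must check that the coincidence $\Phi_{\mathcal C,\mathcal Q}(m)=\Phi_{\mathcal C,\mathcal Q}(m')$ is a nontrivial condition on $\mathcal Q$ (it is: the polar involution induced by a generic $\mathcal Q$ on the common tangent line separates $m$ from $m'$), otherwise that stratum would contribute dimension $9$. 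With these two classical inputs supplied, your dimension count $\dim\Sigma\le 1+7=8<9$ closes correctly and yields the proposition.
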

As a consequence we obtain the following generic result.
\begin{coro}
If $\mathcal C=V(F,G)$ is an irreducible and smooth algebraic curve of $\mathbb P^3$,
then for a generic quadric $\mathcal Q$, $\deg \mathcal C^\mathcal Q=\deg F\times
\deg G\times(\deg F+\deg G-1)$.
\end{coro}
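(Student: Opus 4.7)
The plan is to combine the degree formula from Theorem \ref{THMDEGRE} with the standard numerical invariants of a smooth complete intersection in $\mathbb{P}^3$. Set $d_1 := \deg F$ and $d_2 := \deg G$. By Proposition \ref{birationality}, the map $\Phi_{\mathcal{C},\mathcal{Q}}$ is birational for generic $\mathcal{Q}$, so formula \eqref{degre+classe} gives
$$\deg \mathcal{C}^{\mathcal{Q}} = \deg \mathcal{C} + \rank \mathcal{C}.$$
Bezout's theorem applied to the complete intersection $V(F,G)$ yields $\deg \mathcal{C} = d_1 d_2$, so it remains to prove $\rank \mathcal{C} = d_1 d_2 (d_1 + d_2 - 2)$.

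For the rank, I would use the description of the tangent curve recalled in Section \ref{sec:polar}. After a generic linear change of coordinates so that $\mathcal{C} \not\subset \mathcal{H}^\infty$, the six components of $\boldsymbol{\lambda}_{\mathcal{C}_1}$ are homogeneous polynomials of degree $d_1 + d_2 - 1$ on $\mathbb{P}^3$ (because the entries of $\boldsymbol{\mathfrak{t}}_{\infty,\mathcal{C}_1}$ have degree $d_1 + d_2 - 2$). By the Euler relation \eqref{lien}, restricted to $\mathcal{C}$ these six polynomials share $t$ as a common factor, the quotient being the vector of degree $d_1+d_2-2$ polynomials displayed in \eqref{lien}. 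Smoothness of $\mathcal{C}$ means that $\nabla F(\mathbf{m})$ and $\nabla G(\mathbf{m})$ are linearly independent at every point $m \in \mathcal{C}$, so this quotient vector has no common zero on $\mathcal{C}$. Consequently the rational map $\lambda_{\mathcal{C}_1}|_\mathcal{C}$ is defined on $\mathcal{C}$ by the sub-linear system of $H^0(\mathcal{O}_\mathcal{C}(d_1+d_2-1))$ whose base divisor is exactly $\mathcal{C} \cap \mathcal{H}^\infty$, of degree $d_1 d_2$; the residual linear system has degree $(d_1+d_2-1)\, d_1 d_2 - d_1 d_2 = d_1 d_2 (d_1 + d_2 - 2)$. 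Since distinct generic points of $\mathcal{C}$ have distinct tangent lines, $\lambda_{\mathcal{C}_1}|_\mathcal{C}$ is birational onto its image, and by the last lemma of Section \ref{sec:polar} this image has degree $\rank \mathcal{C}$. Hence $\rank \mathcal{C} = d_1 d_2 (d_1 + d_2 - 2)$.

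Combining yields $\deg \mathcal{C}^{\mathcal{Q}} = d_1 d_2 + d_1 d_2 (d_1 + d_2 - 2) = d_1 d_2 (d_1 + d_2 - 1)$, as claimed. Equivalently, one can invoke Piene's formula $\rank \mathcal{C} = 2(\deg \mathcal{C} + g(\mathcal{C}) - 1) - k_0(\mathcal{C})$ from Section \ref{Piene} together with the adjunction formula $g(\mathcal{C}) = 1 + d_1 d_2 (d_1 + d_2 - 4)/2$ for a smooth complete intersection, using $k_0(\mathcal{C}) = 0$ since $\mathcal{C}$ has no singular points and hence no cusps. The only point requiring real care in the first route is verifying the absence of residual base points after dividing by $t$ and the generic injectivity of $\lambda_{\mathcal{C}_1}|_\mathcal{C}$; both are immediate consequences of the smoothness and irreducibility of $\mathcal{C}$ (the latter already being implicit in the discussion of $\mathbb{T}\mathcal{C}$ in Section \ref{sec:polar}).
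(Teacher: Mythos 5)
Your argument is correct, and it arrives at the right answer, but it takes a different path from the paper. The paper obtains this corollary directly from the degree formula of Proposition \ref{thm:formuledegre} (equivalently from \eqref{degreeformula1}): the Halphen map $\Phi_{\mathcal C_1,\mathcal Q}$ is defined by the $3\times 3$ minors of $(\nabla F\ \nabla G\ \nabla Q)$, which have degree $\deg F+\deg G-1$, and when $\mathcal C=\mathcal C_1=V(F,G)$ is smooth the exceptional set $\mathcal C\cap\sing\mathcal C_1$ is empty, so the correction sum vanishes and birationality gives $\deg\mathcal C^{\mathcal Q}=(\deg F+\deg G-1)\deg\mathcal C$ at once. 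You instead invoke the decomposition $\deg\mathcal C^{\mathcal Q}=\deg\mathcal C+\rank\mathcal C$ of Theorem \ref{THMDEGRE} and then compute $\rank\mathcal C=d_1d_2(d_1+d_2-2)$ by hand, via the linear system cut out on $\mathcal C$ by the components of $\boldsymbol\lambda_{\mathcal C_1}$, the extraction of the fixed factor $t$ from \eqref{lien}, and the base-point-freeness of the residual system guaranteed by the pointwise independence of $\nabla F$ and $\nabla G$. Your rank computation is essentially a self-contained proof of \eqref{degreeformula} in the smooth case, and your appeal to the generic injectivity of the tangent map matches the paper's own assertion that $(\tilde\vartheta_{\mathcal C_1})_{|\mathcal C}$ is birational. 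The trade-off: the paper's route is a one-line specialization of its general formula, while yours makes the $\deg+\rank$ structure of the main theorem concrete and cross-checks it against the classical rank of a smooth complete intersection (which your second route, via Piene's formula and adjunction, confirms independently). Since Theorem \ref{THMDEGRE} is itself proved by combining \eqref{formuledegre} and \eqref{degreeformula}, your derivation is logically downstream of the same two formulas rather than circular; it is simply the longer way around.
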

We will write $i_m(\mathcal A,\mathcal B)$ for the intersection number
of a curve $\mathcal A\subset\mathbb P^3$ and  a surface $\mathcal B\subset\mathbb P^3$ at $m$.
Theorem \ref{THMDEGRE} is a consequence of the following result
involving some polar sufaces \cite{Dolga}.
\begin{prop}\label{thm:formuledegre}
Let $\mathcal C$ be an algebraic curve of $\mathbb P^3$ contained 
in a curve $\mathcal C_1$ which is the complete intersection $V(F,G)$ of two algebraic surfaces.
Assume that $\mathcal C$ is irreducible.
Then, for a generic quadric $\mathcal Q$, the degree of the Halphen transform
of $\mathcal C$ with respect to $\mathcal Q$ is given by
\begin{equation}\label{formuledegre}
\deg\mathcal C^\mathcal Q= \deg\mathcal C\times(\deg F+\deg G-1)
      -\sum_{m\in \mathcal C\cap \sing \mathcal C_1} i_m(\mathcal C, \mathcal P_{\mathcal C_1,B}), 
\end{equation}
for a generic $B\in (\mathbb C^6)^\vee$,
where $\sing \mathcal C_1$ is the set of singular
points of $\mathcal C_1$ and where
$\mathcal P_{\mathcal C_1,B}$ is the {\bf polar surface} of $\mathcal C_1$
given  by 
$\mathcal P_{\mathcal C_1,B}:=B(\bigwedge\limits^{2}(\nabla F\ \nabla G))$.
\end{prop}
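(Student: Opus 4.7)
The plan is to reduce the computation to a Bézout count on $\mathcal{C}$ of a suitable pull-back hypersurface, and then to identify the excess intersection with the singular locus of $\mathcal{C}_1$.

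First, by Proposition \ref{birationality}, for a generic quadric $\mathcal{Q}$ the map $\Phi_{\mathcal{C},\mathcal{Q}}\colon\mathcal{C}\dashrightarrow\mathcal{C}^{\mathcal{Q}}$ is birational, so
\[
\deg\mathcal{C}^{\mathcal{Q}}=\#\bigl(\mathcal{C}^{\mathcal{Q}}\cap H\bigr)
=\#\bigl\{m\in\mathcal{C}\setminus\mathrm{Bs}(\Phi)\,:\,\Phi_{\mathcal{C},\mathcal{Q}}(m)\in H\bigr\},
\]
for a generic hyperplane $H=V(L)\subset\mathbb{P}^3$, where $\mathrm{Bs}(\Phi)$ is the base locus of $\Phi_{\mathcal{C}_1,\mathcal{Q}}$ on $\mathcal{C}$.

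Next I would rewrite the scalar $L\circ\Phi_{\mathcal{C}_1,\mathcal{Q}}$ as a polar form. From \eqref{defiphi}, each coordinate of $\Phi_{\mathcal{C}_1,\mathcal{Q}}(\mathbf{m})$ is a $3\times 3$ minor of $\bigl(\nabla F\ \nabla G\ \nabla Q\bigr)$; expanding by Laplace along the column $\nabla Q$ exhibits each minor as a linear combination, with coefficients the partial derivatives of $Q$, of the $2\times 2$ minors of $(\nabla F\ \nabla G)$, i.e.\ of the components of $\boldsymbol{\vartheta}_{\mathcal{C}_1}(\mathbf{m})$. Therefore
\[
L\bigl(\Phi_{\mathcal{C}_1,\mathcal{Q}}(\mathbf{m})\bigr)
=B\bigl(\boldsymbol{\vartheta}_{\mathcal{C}_1}(\mathbf{m})\bigr)
\]
for some $B\in(\mathbb{C}^6)^\vee$ depending linearly on $L$ and on the coefficients of $Q$. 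The zero locus of this polynomial is precisely the polar surface $\mathcal{P}_{\mathcal{C}_1,B}$, of degree $\deg F+\deg G-1$. For generic $L$ and generic $\mathcal{Q}$, the associated $B$ ranges over a nonempty open subset of $(\mathbb{C}^6)^\vee$, hence can be treated as generic; in particular $\mathcal{P}_{\mathcal{C}_1,B}$ does not contain $\mathcal{C}$.

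Bézout's theorem then gives
\[
\deg\mathcal{C}\cdot(\deg F+\deg G-1)
=\sum_{m\in\mathcal{C}\cap\mathcal{P}_{\mathcal{C}_1,B}} i_m(\mathcal{C},\mathcal{P}_{\mathcal{C}_1,B}).
\]
To match this to the count of preimages of $H$, I split the intersection according to Remark \ref{basepoints}. At any $m\in\mathcal{C}$ that is a smooth point of $\mathcal{C}_1$ and lies outside $\mathcal{Q}$, the map $\Phi$ is regular, and for generic $L$ (equivalently generic $B$) the intersection of $\mathcal{C}$ with $\mathcal{P}_{\mathcal{C}_1,B}$ at $m$ is transverse, contributing multiplicity $1$ and exactly one point of $\mathcal{C}^{\mathcal{Q}}\cap H$. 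For generic $\mathcal{Q}$, the tangency condition $\mathcal{T}_m\mathcal{C}\subset\mathcal{T}_m\mathcal{Q}$ fails at every $m\in\mathcal{C}\cap\mathcal{Q}$, so the only base points of $\Phi$ on $\mathcal{C}$ are the points of $\mathcal{C}\cap\sing\mathcal{C}_1$, and these automatically lie on $\mathcal{P}_{\mathcal{C}_1,B}$ because $\bigwedge^2(\nabla F\ \nabla G)$ vanishes there. Subtracting those excess contributions yields
\[
\deg\mathcal{C}^{\mathcal{Q}}
=\deg\mathcal{C}\cdot(\deg F+\deg G-1)
-\sum_{m\in\mathcal{C}\cap\sing\mathcal{C}_1} i_m(\mathcal{C},\mathcal{P}_{\mathcal{C}_1,B}),
\]
as claimed. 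The most delicate step is the transversality/genericity argument in the last paragraph: one must check that for generic $(L,Q)$ the hypersurface $\mathcal{P}_{\mathcal{C}_1,B}$ meets $\mathcal{C}$ transversely at every smooth point of $\mathcal{C}_1$ on $\mathcal{C}$ (including those lying on $\mathcal{Q}$), so that all excess intersection is concentrated on the finite set $\mathcal{C}\cap\sing\mathcal{C}_1$ and the contribution of each such point is intrinsic to $\mathcal{C}_1$ at $m$ and independent of $\mathcal{Q}$.
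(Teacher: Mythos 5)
Your overall strategy (reduce to a B\'ezout count of $\mathcal C$ against the pull-back of a generic hyperplane, then subtract the excess concentrated at $\mathcal C\cap\sing\mathcal C_1$) is the same as the paper's, but the central identification in your second step is false, and this is exactly where the real work of the proof lies. When you Laplace-expand the $3\times 3$ minors of $\bigl(\nabla F\ \nabla G\ \nabla Q\bigr)$ along the column $\nabla Q$, the coefficients multiplying the $2\times 2$ minors of $(\nabla F\ \nabla G)$ are the entries of $\nabla Q(\mathbf m)$, which are \emph{linear forms in $\mathbf m$}, not constants. Hence $L\circ\Phi_{\mathcal C_1,\mathcal Q}$ is a form of degree $\deg F+\deg G-1$, whereas $B\circ\boldsymbol{\vartheta}_{\mathcal C_1}$ has degree $\deg F+\deg G-2$ for any constant $B\in(\mathbb C^6)^\vee$; the two can never coincide. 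Your own displayed B\'ezout identity makes the inconsistency visible: you intersect $\mathcal C$ with $\mathcal P_{\mathcal C_1,B}$, whose degree is $\deg F+\deg G-2$, yet write the total as $\deg\mathcal C\cdot(\deg F+\deg G-1)$. The discrepancy is precisely $\deg\mathcal C$, which is why the paper's two formulas give $\deg\mathcal C^{\mathcal Q}=\deg\mathcal C+\rank\mathcal C$: the surface $V(L\circ\Phi_{\mathcal C_1,\mathcal Q})$ is the \emph{Halphen polar surface} $\mathcal P_{\mathcal C,\mathcal Q,A}$ of degree $\deg F+\deg G-1$, a genuinely different object from $\mathcal P_{\mathcal C_1,B}$.

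What is missing, and what the paper actually proves, is the local statement that for $m\in\mathcal C\cap\sing\mathcal C_1$ and generic $\mathcal Q$, $A$, $B$ one has
\begin{equation*}
i_m(\mathcal C,\mathcal P_{\mathcal C,\mathcal Q,A})=i_m(\mathcal C,\mathcal P_{\mathcal C_1,B}).
\end{equation*}
This is not automatic: along a branch $\alpha$ at $m$ the function $A\bigl(\bigwedge^3(\nabla F\ \nabla G\ \nabla Q)\bigr)\circ\alpha$ is a combination of the six functions $(F_jG_k-F_kG_j)\circ\alpha$ with \emph{moving} coefficients $\alpha^{(i)}$, some of which vanish at $m$, so one must rule out both an increase of valuation (from the vanishing coefficients) and a drop (from cancellations). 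The paper handles this by normalizing $m=[0:0:0:1]$ with $\alpha^{(t)}\equiv 1$ and checking that the minimum of the valuations $\val\bigl((F_jG_k-F_kG_j)\circ\alpha\bigr)$ is attained by the combination, which for generic $B$ is also $i_m(\mathcal B,\mathcal P_{\mathcal C_1,B})$. Your final paragraph correctly flags that the excess contribution must be ``intrinsic to $\mathcal C_1$ at $m$ and independent of $\mathcal Q$,'' but this is precisely the assertion that needs proof rather than a routine transversality check; as written, your argument conflates two polar surfaces of different degrees and does not establish it.
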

\begin{proof}
We use the following classical degree formula
(valid for a generic quadric $\mathcal Q$ and a generic 
$A\in(\mathbb C^4)^\vee$): 
\begin{equation}\label{degreeformula1}
\deg\mathcal C^\mathcal Q= \deg\mathcal C\times(\deg F+\deg G-1)
      -\sum_{m\in\mathcal E} i_m(\mathcal C, \mathcal P_{\mathcal C,\mathcal Q,A}),
\end{equation}
where $\mathcal E$ is the set of $m\in\mathcal C$ for which $\bigwedge^3(\nabla F(\mathbf m)\ \nabla G(\mathbf m)\ \nabla Q(\mathbf m))=\mathbf 0$ and where $\mathcal P_{\mathcal C,\mathcal Q,A}$ is the Halphen polar surface given by 
$\mathcal P_{\mathcal C,\mathcal Q,A}:= V\left(A\left(\bigwedge\limits^{3}( 
\nabla F\ \nabla G\ \nabla Q)\right)\right)\subset\mathbb P^3$.

Observe that, due to Remark \ref{basepoints}, for a generic quadric $\mathcal Q$,
$\mathcal E=\mathcal C\cap \sing \mathcal C_1$. 

Let $m\in\mathcal C\cap\sing\mathcal C_1$.
Let us prove that for generic $\mathcal Q$, $A$ and $B$, we have the equality $i_m(\mathcal C, \mathcal P_{\mathcal C,\mathcal Q,A})=
i_m(\mathcal C, \mathcal P_{\mathcal C_1,B})$.
Without loss of generality we assume that $m[0:0:0:1]$. Let $\mathcal B$
be a branch of $\mathcal C$ at $m$ parametrized by some $\alpha=[\alpha^{(x)}:\alpha^{(y)}:\alpha^{(z)}:\alpha^{(t)}]$ with $\alpha^{(t)}\equiv 1$ of the form
\eqref{parametrisationbranche1}.
For a generic quadric $\mathcal Q$ and a generic $A\in(\mathbb P^3)^\vee$, we have
\begin{multline*}
i_m(\mathcal B,\mathcal P_{\mathcal C,\mathcal Q,A})
    =\min_{i,j,k\in\{x,y,z,t\}\ pairwise\ distinct} \min\left(\val\left(
    \alpha^{(i)}\times 
      [F_{j}G_{k}-F_{k}G_{j}]\circ\alpha\right),\right.\\
           \left. \val\left(\alpha^{(i)}\times[F_{i}G_{k}-F_{k}G_{i}]\circ\alpha
              -  \alpha^{(j)}\times[F_{k}G_{j}-F_{j}G_{k}]\circ\alpha\right)\right).
\end{multline*}
Observe first that 
$$\min_{j,k\in\{x,y,z,t\}\ pairwise\ distinct}\val((F_jG_k-F_kG_j)\circ\alpha)\le 
      i_m(\mathcal B,\mathcal P_{\mathcal C,\mathcal Q,A}).$$
Let us prove that this inequality is indeed an equality.
Since $\alpha^{(t)}\equiv 1$, we observe that 
$$ i_m(\mathcal B,\mathcal P_{\mathcal C,\mathcal Q,A})
     \le  \min_{j,k\in\{x,y,z\}\ pairwise\ distinct} \val((F_{j}G_{k}-F_{k}G_{j})\circ\alpha).$$
Now, if $\val((F_tG_\ell-F_\ell G_t)\circ\alpha) < \min_{j,k\in\{x,y,z\}\ pairwise\ distinct} \val((F_{j}G_{k}-F_{k}G_{j})\circ\alpha)$,
then 
$$ \val \left(\alpha^{(t)}\times[F_{t}G_{\ell}-F_{\ell}G_{t}]\circ\alpha
              -  \alpha^{(j)}\times[F_{\ell}G_{j}-F_{j}G_{\ell}]\circ\alpha\right)
      = \val((F_tG_\ell-F_\ell G_t)\circ\alpha),$$
for any $j\in\{x,y,z\}\setminus\{\ell\}$.
Hence
\begin{eqnarray*}
i_m(\mathcal B,\mathcal P_{\mathcal C,\mathcal Q,A})
       &=&\min_{j,k\in\{x,y,z,t\}\ pairwise\ distinct}\val((F_jG_k-F_kG_j)\circ\alpha)\\
&=&i_m(\mathcal B,\mathcal P_{\mathcal C_1,B}),
\end{eqnarray*}
for a generic $B\in (\mathbb C^6)^\vee$. This ends the proof of Theorem
\ref{thm:formuledegre}.
\end{proof}
Recall that $\deg_{\mathbb P^5}\mathbb T\mathcal C=\deg 
\overline{\tilde\vartheta_{\mathcal C_1}(\mathcal C)}$ with 
$\tilde\vartheta_{\mathcal C_1}:\mathbb P^3\rightarrow\mathbb P^5$
given by $\boldsymbol{\tilde\vartheta}_{\mathcal C_1}(\mathbf m)=\left[\bigwedge\limits^{2}(\nabla F(\mathbf m)\ \nabla G(\mathbf m))\right]\in\mathbb C^6$. 
Since $(\tilde\vartheta_{\mathcal C_1})_{|\mathcal C}$ is birational,
we also have
\begin{equation}\label{degreeformula}
\rank \mathcal C=\deg  \tilde\vartheta_{\mathcal C_1}(\mathcal C)= \deg \mathcal C\, (\deg F+\deg G-2)
      -\sum_{m\in\mathcal C\cap\sing\mathcal C_1} 
    i_m(\mathcal C, \mathcal P_{\mathcal C_1,B}),
\end{equation}
for a generic $B\in(\mathbb C^6)^\vee$.
\begin{proof}[Proof of Theorem \ref{THMDEGRE}]
We combine \eqref{formuledegre} and \eqref{degreeformula}.
\end{proof}
\subsection{Examples}
\begin{exa}
Consider the Viviani curve $\mathcal V= V(x^2+y^2+z^2-t^2,x^2-xt+y^2)$
in $\mathbb P^3$.
In this case $\mathcal C_1=\mathcal C$ and \eqref{degreeformula} becomes
$\rank\mathcal V =8-i_{P}(\mathcal V, \mathcal P_{\mathcal V,B})$
with $P[1:0:0:1]$. Observe that $P$ is a singular point of $\mathcal V$ 
of multiplicity 2. The tangent cone of $\mathcal V$ at $P$ is $V(y^2-z^2,x-t)$.
Moreover the tangent plane to $\mathcal P_{\mathcal V,B}$ at $P$ is
$V((b_1-b_5)y+(b_2+b_6)z)$. Hence $\mathcal P_{\mathcal V,B}$
is always transverse to $\mathcal V$ at $P$ and $i_{P}(\mathcal V,\mathcal P_{\mathcal V,B})=2$
(for a generic $B\in(\mathbb C^6)^\vee$). So $\rank\mathcal V=6$ and $\deg \mathcal V^\mathcal Q=4+6=10$, for a generic quadric $\mathcal Q$.
\end{exa}

\begin{exa}\label{Twistedcubic}
For the twisted cubic curve $\mathcal K=V(y^2-zx,yz-xt,yt-z^2)\subset\mathcal C_1= V(y^2-zx,yz-xt)$
in $\mathbb P^3$ considered in Example \ref{twistedcubic1}, \eqref{degreeformula} becomes
$\rank\mathcal C=6-i_{P}(\mathcal K, \mathcal P_{\mathcal K,B}),$
with $P[0:0:0:1]$. Recall that $\mathcal C_1=\mathcal L\cup\mathcal K$ where
$\mathcal L=V(x,y)$ in $\mathbb P^3$. Observe that $P$ is a non singular point of $\mathcal K$ with tangent line $\mathcal L$ and with osculating plane $V(x)$.  The tangent plane to $\mathcal P_{\mathcal C_1,B}$ at $P$ is $V(2b_1y-b_2x)$
which is tangent but generically not osculating to $\mathcal K$ at $P$.
Hence $i_{P}(\mathcal K,\mathcal P_{\mathcal C_1,B})=2$
(for a generic $B\in(\mathbb C^6)^\vee$). We obtain
$\rank\mathcal K=4$ and 
$\deg \mathcal K^\mathcal Q=3+4=7$, for a generic quadric $\mathcal Q$.
\end{exa}
\subsection{Rational curves}
Consider a rational curve parametrized by a morphism $\gamma:\mathbb P^1\rightarrow\mathbb P^3$. Due to \eqref{parametrisation}, $\mathcal C^\mathcal Q$ is the image of the rational map $\psi:\mathbb P^1
\dashrightarrow \mathbb P^3$ given by $\psi_{\mathcal C,\mathcal Q}:=[\mathfrak b_Q(\gamma,\gamma_u)\cdot\gamma-\mathfrak b_Q(\gamma,\gamma)\cdot \gamma_u ] $. Using twice the Euler formula for $\gamma$,
we obtain that
\begin{equation}\label{psi}
\psi_{\mathcal C,\mathcal Q}=[\mathfrak b_Q(\gamma,\gamma_u)\cdot\gamma_v -\mathfrak b_Q(\gamma,\gamma_v)\cdot \gamma_u ].
\end{equation}
Moreover, via the Pl\"ucker embedding, $\mathbb T\mathcal C$ 
corresponds to the Zariski closure of the image of the morphism $\eta:\mathbb P^1 
\rightarrow \mathbb P^5$ defined on coordinates by
$\eta:=\left[\bigwedge\limits^{2}(\gamma\ \gamma_u)\right]
=\left[\bigwedge\limits^{2}(\gamma_u\ \gamma_v)\right]$ (since $\gamma=u\gamma_u+v\gamma_v$).
Hence we have proved the following result.
\begin{prop}
For a generic space rational curve $\mathcal C$ of degree $d$,
$\rank \mathcal C=2d-2$ and,
for a generic quadric $\mathcal Q$, the degree
of $\mathcal C^\mathcal Q$ is $3d-2$.

This is true for any smooth rational curve image of some morphism 
$\gamma:\mathbb P^1\rightarrow\mathbb P^3$ such that  the coordinates
of $\bigwedge\limits^{2}(\gamma_u\ \gamma_v)$
have no common prime factor.
\end{prop}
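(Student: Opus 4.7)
The plan is to read off both numerical invariants from the explicit parametrizations of $\mathbb{T}\mathcal{C}$ and $\mathcal{C}^{\mathcal{Q}}$ established just above. Since $\gamma:\mathbb{P}^1\to\mathbb{P}^3$ is homogeneous of degree $d$, the partials $\gamma_u,\gamma_v$ are homogeneous of degree $d-1$, so each of the six Plücker coordinates of $\eta=\bigwedge^{2}(\gamma_u\ \gamma_v)$ is a homogeneous polynomial of degree $2(d-1)=2d-2$ in $(u,v)$. Similarly, the scalar factor $\mathfrak{b}_Q(\gamma,\gamma_u)$ has degree $(d)+(d-1)=2d-1$, and hence each of the four coordinates of the reduced expression $\psi_{\mathcal{C},\mathcal{Q}}$ appearing in \eqref{psi} has degree $(2d-1)+(d-1)=3d-2$.

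To convert these degree counts into the degrees of the image curves, I need two birationality statements. Under the hypothesis that the six coordinates of $\bigwedge^{2}(\gamma_u\ \gamma_v)$ have no common prime factor, $\eta:\mathbb{P}^1\to\mathbb{P}^5$ is a genuine morphism of degree $2d-2$; combining this with the fact that for a smooth non-rectilinear curve the Gauss map $\mathcal{C}\to\mathbb{T}\mathcal{C}$ is birational (so that $\eta$, which is this Gauss map composed with the birational parametrization $\gamma$, is birational onto its image), the lemma of Section \ref{sec:polar} gives
\[
\rank\mathcal{C}\;=\;\deg_{\mathbb{P}^5}\overline{\eta(\mathbb{P}^1)}\;=\;2d-2.
\]
For the Halphen transform, I would next show that for a generic $\mathcal{Q}$ the four coordinates of $\psi_{\mathcal{C},\mathcal{Q}}$ share no common prime factor, so that $\psi_{\mathcal{C},\mathcal{Q}}:\mathbb{P}^1\to\mathbb{P}^3$ is a morphism of degree $3d-2$. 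Proposition \ref{birationality} then ensures that $(\Phi_{\mathcal{C},\mathcal{Q}})_{|\mathcal{C}}$, and hence $\psi_{\mathcal{C},\mathcal{Q}}=\Phi_{\mathcal{C},\mathcal{Q}}\circ\gamma$, is birational onto $\mathcal{C}^{\mathcal{Q}}$, and therefore $\deg\mathcal{C}^{\mathcal{Q}}=3d-2$.

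To pass from this precise version to the "generic rational curve" assertion, I would observe that the condition that the coordinates of $\bigwedge^{2}(\gamma_u\ \gamma_v)$ have no common prime factor is Zariski open in the parameter space of degree-$d$ morphisms $\mathbb{P}^1\to\mathbb{P}^3$, and is non-empty (any degree-$d$ embedding in sufficiently general position provides an example), hence it is generic. The main technical hurdle, in my view, is justifying that for a generic quadric $\mathcal{Q}$ no spurious common factor appears among the four coordinates of $\psi_{\mathcal{C},\mathcal{Q}}$. The cleanest route is to view these entries as polynomials in $(u,v)$ whose coefficients depend linearly on the six coefficients of $Q$, so that the existence of a common factor cuts out a proper closed subset of the six-dimensional linear system of quadrics; generic $\mathcal{Q}$ therefore avoids it. Once this step is in hand, the rest is either routine degree-counting or a direct invocation of Proposition \ref{birationality}.
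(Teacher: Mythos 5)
Your proposal is correct and follows essentially the same route as the paper: both read $\rank\mathcal{C}=2d-2$ and $\deg\mathcal{C}^{\mathcal{Q}}=3d-2$ off the coordinate degrees of the explicit parametrizations $\eta=\left[\bigwedge^{2}(\gamma_u\ \gamma_v)\right]$ and $\psi_{\mathcal{C},\mathcal{Q}}=[\mathfrak b_Q(\gamma,\gamma_u)\cdot\gamma_v-\mathfrak b_Q(\gamma,\gamma_v)\cdot\gamma_u]$, with the paper leaving implicit the birationality and common-factor checks that you spell out. One small correction: the quadrics of $\mathbb{P}^3$ form a $\mathbb{P}^9$ (ten coefficients), not a six-dimensional linear system, though this slip does not affect your argument.
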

\begin{exa}\label{twistedcubic1}
For a generic quadric $\mathcal Q$, the Halphen transform of the twisted 
cubic curve $\mathcal K$ (image of the morphism $\gamma:\mathbb P^1\rightarrow\mathbb P^3$
given in coordinates by $\gamma(u,v)=(u^3,u^2v,uv^2,v^3)$) has degree 7.
Moreover $\rank\mathcal K=4$
(as already obtained in Example \ref{Twistedcubic}).
\end{exa}
\section{Branch desingularization}\label{sec:desing}
\subsection{General case}
Consider a branch $\mathcal B$ of type $(e,r,s)$ of $\mathcal C$
at $m_0$.
Up to a linear change of variables, we assume that $m_0[0:0:0:1]$
and that a parametrization of $ \mathcal B$ is given by
\begin{equation}\label{parametrisationbranche}
\alpha:u\mapsto\left[u^e\sum_{n\ge 0}a_nu^n:u^r\sum_{k\ge 0}b_k u^k:
u^s\sum_{\ell\ge 0}c_\ell u^\ell:1\right],
\end{equation}
with $0<e<r<s$ and $a_0=b_0=c_0=1$. 
Observe that the components of $\alpha$ are in $\mathbb C[[u]]$.
We then 
define 
$$n_0:=\inf\{n\ge 1\ :\ a_n\ne 0\},\quad k_0:=\inf\{k\ge 1\ :\ b_k\ne 0\}\quad\mbox{and}\quad  \ell_0:=\inf\{\ell\ge 1\ :\ c_\ell\ne 0\}.$$
We assume (without loss of generality) that $n_0\ne r$ and that $k_0\ne s$.

The following proposition determines the type
of the branch $\Phi_{\mathcal C,\mathcal Q}(\mathcal B)$ for every branch 
$\mathcal B$ of $\mathcal C$ and for a generic quadric $\mathcal Q$ of $\mathbb P^3$.
\begin{prop}\label{desing}
Let $\mathcal C$ be a curve of $\mathbb P^3$.
Let $\mathcal B$ be a branch of type $(e,r,s)$ of $\mathcal C$
with parametrization \eqref{parametrisationbranche}.

If $r\ne 2e$ and $s\ne 2e$, then for a generic quadric $\mathcal Q$, 
$\Phi_{\mathcal C,\mathcal Q}(\mathcal B)$ is a branch
of type $(a,b,c)$ with $\{a,b,c\}=\{e,r-e,s-e\}$.

If $r=2e$ and if $s\ne\min(\val(\alpha_1^2-\alpha_2),3e)$, then for a generic quadric $\mathcal Q$, 
$\Phi_{\mathcal C,\mathcal Q}(\mathcal B)$ is a branch
of type $(a,b,c)$ with $\{a,b,c\}=\{e,\min(2e,\val(\alpha_1^2-\alpha_2)-e),s-e\}$.

If $r=2e$ and if $s=\val(\alpha_1^2-\alpha_2)<3e$, then for a generic quadric $\mathcal Q$, 
$\Phi_{\mathcal C,\mathcal Q}(\mathcal B)$ is a branch
of type $(e,s-e,\min(2e,\val(\alpha_1^2-\alpha_2-\gamma\alpha_3)-e))$,
with $\gamma:=\sum_{k=0}^sa_ka_{s-k}-b_s$.

If $r=2e$ and if $s=3e\le \val(\alpha_1^2-\alpha_2)$, then for a generic quadric $\mathcal Q$, 
$\Phi_{\mathcal C,\mathcal Q}(\mathcal B)$ is a branch
of type $(e,2e,
\min(3e, \val(\alpha_1^2-\alpha_2-\gamma\alpha_3)-e,\val(\alpha_1\alpha_2-2\alpha_3)-e,\val(\alpha'_1\alpha_2-2\alpha'_2\alpha_1+\alpha'_3)-e+1))$.

If $s=2e$, then for a generic quadric $\mathcal Q$, 
$\Phi_{\mathcal C,\mathcal Q}(\mathcal B)$ is a branch
of type $(r-e,e,\min(\val(\alpha_1^2-\alpha_3)-e,2e))$.
\end{prop}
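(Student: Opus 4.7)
The plan is a direct power-series computation based on \eqref{parametrisation}. Writing $\alpha=(\alpha_1,\alpha_2,\alpha_3,1)$ with $\val\alpha_1=e$, $\val\alpha_2=r$, $\val\alpha_3=s$, and setting $P(u):=Q(\alpha(u))$ and $R(u):=\mathfrak b_Q(\alpha,\alpha')=P'/2$, the image parametrization has components $\Psi_i=R\alpha_i-P\alpha_i'$ (with $\alpha_4=1$, so $\Psi_4=R$). For a generic quadric $\mathcal Q$, $P(0)=q_{44}\ne 0$ and the $u^e$-coefficient of $P$ equals $2q_{14}\ne 0$; hence $\val P=0$, $\val R=e-1$, and reading off dominant terms yields $\val\Psi_1=\val\Psi_4=e-1$, $\val\Psi_2=r-1$, $\val\Psi_3=s-1$. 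In particular, $\Phi_{\mathcal C,\mathcal Q}(m_0)=[-q_{44}:0:0:q_{14}]$.

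After a linear change of variables sending the image to $[0:0:0:1]$, three natural local coordinates of the image branch are $\Psi_2$, $\Psi_3$ and $q_{14}\Psi_1+q_{44}\Psi_4$, each normalized by the non-vanishing combination $q_{44}\Psi_1-q_{14}\Psi_4$ of valuation $e-1$. Their valuations are thus $r-e$, $s-e$ and $\eta:=\val(q_{14}\Psi_1+q_{44}\Psi_4)-(e-1)$. Using the identity
$$q_{14}\Psi_1+q_{44}\Psi_4 \;=\; R\,(q_{14}\alpha_1+q_{44}) \;-\; q_{14}\,P\,\alpha_1',$$
the leading $u^{e-1}$-terms cancel identically while the $u^{2e-1}$-coefficient is a generically non-zero polynomial in the $q_{ij}$; hence $\eta=e$ and, in the generic sub-case $r\ne 2e$ and $s\ne 2e$, the three pairwise-distinct orders yield the type $\{e,r-e,s-e\}$.

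The special case $r=2e$ (the case $s=2e$ being symmetric) is the delicate one: two of the three natural orders now coincide ($r-e=\eta=e$), and a further linear combination must be taken to separate them and to read off the type in a basis maximizing the three orders. Writing $\alpha_2=\alpha_1^2-(\alpha_1^2-\alpha_2)$ and carrying out the next order of the expansion shows that the residual Taylor series is, up to a generically non-zero scalar, governed by $(\alpha_1^2-\alpha_2)/u^e$; its valuation is therefore $\val(\alpha_1^2-\alpha_2)-1$, capped at $3e-1$ by the generic cross-terms $q_{ij}\alpha_i\alpha_j$ appearing in $P$ at order $u^{3e}$. Dividing by $u^{e-1}$ produces the middle type entry $\min(2e,\val(\alpha_1^2-\alpha_2)-e)$.

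The last two sub-cases arise when this middle entry coincides with $s-e$, i.e.\ when $s=\val(\alpha_1^2-\alpha_2)<3e$ or $s=3e$: a triple collision with $\Psi_3$ forces one more linear combination, and the next non-vanishing Taylor coefficient is controlled by the invariants in the statement. The scalar $\gamma$ is precisely the coefficient needed so that $\alpha_1^2-\alpha_2-\gamma\alpha_3$ extends the cancellation to the next order, while $\alpha_1\alpha_2-2\alpha_3$ and $\alpha_1'\alpha_2-2\alpha_2'\alpha_1+\alpha_3'$ come respectively from the $q_{12}$- and $q_{13}$-contributions (and their $u$-derivatives) to $P$ at order $u^{3e}$; the shift ``$-e+1$'' instead of ``$-e$'' in the last invariant reflects the presence of $\alpha_i'$. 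The main technical obstacle throughout is the bookkeeping of these chained cancellations; verifying that no further accidental cancellation occurs for a generic $\mathcal Q$ reduces, in each case, to checking that certain explicit polynomials in the $q_{ij}$ are not identically zero, which follows from inspection of their leading monomials.
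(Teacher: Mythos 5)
Your overall strategy is the same as the paper's: expand the image parametrization \eqref{parametrisation} in powers of $u$, pass to linear coordinates adapted to the image point $[-q_{44}:0:0:q_{14}]$, and read the type off the valuations of suitable combinations of the $\Psi_i$. Your preliminary computations ($\val P=0$, $\val R=e-1$, $\val\Psi_2=r-1$, $\val\Psi_3=s-1$) are correct. The gap is in your third adapted coordinate. You claim that in $q_{14}\Psi_1+q_{44}\Psi_4=R\,(q_{14}\alpha_1+q_{44})-q_{14}P\alpha_1'$ everything below order $u^{2e-1}$ cancels, so that $\eta=e$. This fails whenever $e<r<2e$: the terms $2q_{24}\alpha_2+2q_{34}\alpha_3$ in $P=Q(\alpha)$ contribute $q_{24}\alpha_2'+q_{34}\alpha_3'$ to $R=P'/2$, and the resulting piece $q_{44}q_{24}\alpha_2'$ of $R\cdot q_{44}$ is cancelled by nothing, so $\val(q_{14}\Psi_1+q_{44}\Psi_4)=r-1<2e-1$ and $\eta=r-e$, not $e$. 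Concretely, for an ordinary cuspidal branch of type $(2,3,5)$ (which lies in your ``generic sub-case'' $r\ne 2e$, $s\ne 2e$), your three coordinates have relative valuations $\{1,1,3\}$ rather than the pairwise distinct $\{2,1,3\}$ you assert, and the missing value $e=2$ only appears after a further combination with $\Psi_2$. So the first case of the proposition is not established by your argument for any branch with $r<2e$, which is most singular branches of interest (the paper's own examples use types $(2,3,5)$ and $(3,4,6)$).

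The repair is exactly what the paper does: take $\theta_1:=q_{14}\Psi_1+q_{24}\Psi_2+q_{34}\Psi_3+q_{44}\Psi_4$, which still vanishes at the image point and expands as $\sum_{i=1}^{3}B_{i,4}\alpha_i'+\sum_{3\ge i>j\ge 1}B_{i,j}(\alpha_i'\alpha_j-\alpha_i\alpha_j')$ with $B_{i,j}=q_{j,4}A_i-q_{i,4}A_j$ and $A_j=\sum_i q_{i,j}\alpha_i$; each $B_{i,j}$ has valuation exactly $e$ (its constant term cancels), so $\val\theta_1=2e-1$ for every type $(e,r,s)$, and your scheme then proceeds as intended. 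A secondary weakness: your last paragraph, which covers the three genuinely hard sub-cases ($r=2e$ with $s=\val(\alpha_1^2-\alpha_2)<3e$, $r=2e$ with $s=3e$, and $s=2e$), only names the invariants that should appear and asserts that they control the next non-vanishing coefficient; it does not carry out the chained combinations (the paper's $\tilde\theta_1$, $\tilde\theta_1+\gamma\theta_3$, $\tilde\theta_2$) nor verify that the stated minima are attained generically, and that computation is the actual content of the proposition in those cases.
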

\begin{coro}[Singularity and inflexion decrease]\label{desingularisation}
We observe that for a generic quadric $\mathcal Q$, $ \Phi_{\mathcal C,\mathcal Q}$ transforms a branch $\mathcal B$ of type $(e,r,s)$
in a branch of type $(e',r',s')$ with $e'\le e$, $r'\le r$ and $s'\le s$
and that $(e',r',s')\ne(e,r,s)$ except if $r=2e$ and $s=3e$ and
$$4e=\val(\alpha_1\alpha_2-2\alpha_3)=
\val(\alpha_1^2-\alpha_2-\gamma\alpha_3)= \val(\alpha'_1\alpha_2-2\alpha'_2\alpha_1+\alpha'_3)+1.$$

In particular, if $\mathcal B$ is an inflectional nonsingular branch of $\mathcal C$ of type $(1,r,s)$, then, for a generic quadric $\mathcal Q\subset\mathbb P^3$, the type of $\Phi_{\mathcal C,\mathcal Q}(\mathcal B)$ is $(1,r-1,s-1)$ (due to the first case in Proposition \ref{desing}).
\end{coro}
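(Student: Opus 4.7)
The approach is to compute the Puiseux expansion of the image branch $\Phi_{\mathcal C,\mathcal Q}(\mathcal B)$ at $m_1:=\Phi_{\mathcal C,\mathcal Q}(m_0)$ directly from \eqref{parametrisation}. Since $\mathfrak b_Q(\alpha,\alpha')=\tfrac{1}{2}(Q\circ\alpha)'$, the map $\Phi_{\mathcal C,\mathcal Q}\circ\alpha$ is projectively equal to $u\mapsto[(\sigma(u)\alpha(u))']$, where $\sigma(u):=(M_{44}/Q(\alpha(u)))^{1/2}$ is a well-defined power series with $\sigma(0)=1$ (with $(M_{ij})$ the symmetric matrix of $Q$, and $M_{44}=Q(m_0)\ne 0$ for generic $\mathcal Q$). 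Setting $c_e:=M_{14}/M_{44}$, $c_r:=M_{24}/M_{44}$, $c_s:=M_{34}/M_{44}$ and $\kappa:=(3M_{14}^2-M_{11}M_{44})/(2M_{44}^2)$, the expansion
\[
\sigma(u)=1-c_eu^e\alpha_1-c_ru^r\alpha_2-c_su^s\alpha_3+\kappa u^{2e}\alpha_1^2+(\textrm{higher order})
\]
identifies $m_1=[-M_{44}:0:0:M_{14}]$. The linear change of coordinates $(\tilde x,\tilde y,\tilde z,\tilde t):=(M_{14}x+M_{44}t,y,z,M_{14}t)$ sends $m_1$ to $[0:0:0:1]$. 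Setting $\eta_j(u):=e_j\alpha_j(u)+u\alpha_j'(u)$ with $(e_1,e_2,e_3):=(e,r,s)$, the four components $\Phi^{(\star)}$ obtained from $(\sigma\alpha)'$ after dividing by $u^{e-1}$ satisfy
\[
\Phi^{(x)}=u\alpha_1\sigma'+\eta_1\sigma,\quad \Phi^{(y)}=u^{r-e}[u\alpha_2\sigma'+\eta_2\sigma],
\]
\[
\Phi^{(z)}=u^{s-e}[u\alpha_3\sigma'+\eta_3\sigma],\quad \Phi^{(t)}=u^{1-e}\sigma',
\]
and in the affine chart at $m_1$ the branch $\Phi_{\mathcal C,\mathcal Q}(\mathcal B)$ is parametrized by $X=(c_e\Phi^{(x)}+\Phi^{(t)})/(c_e\Phi^{(t)})$, $Y=\Phi^{(y)}/\Phi^{(t)}$, $Z=\Phi^{(z)}/\Phi^{(t)}$.

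Substituting the expansion of $\sigma$ into these formulas gives
\[
c_e\Phi^{(x)}+\Phi^{(t)}=2(\kappa-c_e^2)u^e\alpha_1\eta_1-c_ru^{r-e}\eta_2-c_su^{s-e}\eta_3+(\textrm{higher}),
\]
while $\Phi^{(y)}$ starts with $u^{r-e}\eta_2$ and $\Phi^{(z)}$ with $u^{s-e}\eta_3$. In the first case $r\ne 2e$ and $s\ne 2e$, the valuations of $(X,Y,Z)$ take values in $\{e,r-e,s-e\}$; when two of them coincide, a further linear change of affine coordinates $\tilde Y=Y-\lambda X$ (well-defined because the leading coefficients are nontrivial polynomials in the $M_{ij}$, hence nonzero for generic $\mathcal Q$) separates them, yielding the claimed type. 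The case $s=2e$ is handled by the same argument with the roles of $\alpha_2$ and $\alpha_3$ exchanged: the $u^{s-e}=u^e$-term of $\Phi^{(z)}$ collides with the $u^e$-correction in $\sigma$, and the surviving valuation is controlled by $\val(\alpha_1^2-\alpha_3)$ capped at $2e$.

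The delicate case is $r=2e$, where $2(\kappa-c_e^2)u^e\alpha_1\eta_1$ and $-c_ru^{r-e}\eta_2=-c_ru^e\eta_2$ collide in $c_e\Phi^{(x)}+\Phi^{(t)}$; their combined series $H(u):=2(\kappa-c_e^2)\alpha_1\eta_1-c_r\eta_2=2eR(u)+uR'(u)$ with $R(u):=(\kappa-c_e^2)\alpha_1^2-c_r\alpha_2$ has nonzero value $H(0)=2e(\kappa-c_e^2-c_r)$ for generic $\mathcal Q$. A term-by-term comparison yields the identity $H(u)-R(0)\eta_2(u)=(\kappa-c_e^2)[2e(\alpha_1^2-\alpha_2)+u(\alpha_1^2-\alpha_2)']$, which identifies $\alpha_1^2-\alpha_2$ as the quantity governing the first-order deviation from proportionality between $H$ and $\eta_2$. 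After the coordinate change $\tilde Y=Y-\lambda X$ that cancels the shared $u^e$-term, the valuation of $\tilde Y$ is then controlled by $\val(\alpha_1^2-\alpha_2)$, capped at $2e$ by the contribution of the $u^{2e}$-coefficient $\kappa\alpha_1^2-c_r\alpha_2$ of $\sigma$ to $\Phi^{(t)}$; this produces the middle valuation appearing in the statement. The excluded subcases $s=\val(\alpha_1^2-\alpha_2)<3e$ and $s=3e\le\val(\alpha_1^2-\alpha_2)$ correspond to simultaneous collisions of this valuation with $s-e$, resolved by the next-order cancellations in $\Phi^{(z)}$ that introduce the quantities $\alpha_1\alpha_2-2\alpha_3$ and $\alpha_1'\alpha_2-2\alpha_2'\alpha_1+\alpha_3'$ featured in the statement. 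The main obstacle is the combinatorial bookkeeping across all subcases: each requires expanding $\sigma$ and $\sigma'$ to precisely the order at which the relevant cancellations manifest, and verifying that the resulting leading coefficients (as polynomials in $(M_{ij})$) are nontrivial. The underlying computations are elementary power-series manipulations.
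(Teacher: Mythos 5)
Your starting observation is correct and is essentially a repackaging of the paper's own computation: since $\mathfrak b_Q(\alpha,\alpha')=\tfrac12(Q\circ\alpha)'$, the parametrization \eqref{parametrisation} equals $[(\sigma\alpha)']$ up to the unit factor $-M_{44}^{1/2}(Q\circ\alpha)^{-3/2}$, and your expansion of $c_e\Phi^{(x)}+\Phi^{(t)}$ matches, after normalization, the paper's $\theta_1=(m_{1,1}-m_{1,4}^2)\alpha_1\alpha'_1+\cdots$ in the proof of Proposition \ref{desing} (note $2(\kappa-c_e^2)=(M_{14}^2-M_{11}M_{44})/M_{44}^2$). So the route is the paper's route, written with a derivative of a rescaled parametrization instead of explicit linear combinations $\theta_i$; the low-order identities you state (the value $H(0)=2e(\kappa-c_e^2-c_r)$, the identity $H-R(0)\eta_2=(\kappa-c_e^2)[2e(\alpha_1^2-\alpha_2)+u(\alpha_1^2-\alpha_2)']$) check out.

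There are, however, two genuine gaps. First, the subcases $r=2e$ with $s=\val(\alpha_1^2-\alpha_2)<3e$ and with $s=3e\le\val(\alpha_1^2-\alpha_2)$ are precisely where the quantities $\gamma$, $\alpha_1\alpha_2-2\alpha_3$ and $\alpha'_1\alpha_2-2\alpha'_2\alpha_1+\alpha'_3$ of the exceptional condition come from; you defer them to ``combinatorial bookkeeping'' without carrying the expansions of $\sigma$ and $\sigma'$ to the needed order (where cross terms such as $M_{12}\alpha_1\alpha_2$, of valuation $e+r=3e$, enter) or verifying that the relevant leading coefficients are generically nonzero. Since the entire exceptional clause of the corollary lives in exactly these subcases, this cannot be left as a remark. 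Second, and more to the point for the statement under review: even granting the full classification of image types, the corollary asserts the componentwise inequalities $e'\le e$, $r'\le r$, $s'\le s$ for the \emph{sorted} triple and a characterization of when all three are equalities. This requires a case-by-case comparison that you never perform --- e.g., in the first case one must check that the middle value of $\{e,r-e,s-e\}$ is always $<r$ and the largest always $<s$; in the case $r=2e$, $s\ne\min(\val(\alpha_1^2-\alpha_2),3e)$ one must check that the sorted triple cannot equal $(e,2e,s)$; and in the case $r=2e$, $s=3e$ one must see that $s'=s$ forces the three stated valuation conditions. Your proposal stops before this step, so the assertion actually being proved is not reached. (The ``in particular'' clause does follow from the first case once one notes that an inflectional nonsingular branch has $e=1$ and $r\ge3>2e$, $s>2e$, but this too should be said explicitly.)
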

\begin{proof}[Proof of Proposition \ref{desing}]
Let us write $\alpha_1,\alpha_2,\alpha_3,\alpha_4$
(resp. $\psi_1,\psi_2,\psi_3,\psi_4$) for the four coordinates of
$\alpha$ given by \eqref{parametrisationbranche} (resp. of $\Psi$ given by \eqref{parametrisation}). 
Recall that $Q(m)={}^tm\cdot M\cdot m$ for some symmetric matrix $M=(m_{i,j})_{i,j}$. We assume that $m_{4,4}=1$ and $m_{1,4}m_ {2,4}m_{3,4}\ne 0$.
With these notations we have 
$$\psi_{i_0}(u)=\sum_{j\ne i_0}A_j(u)
   [\alpha'_j(u)\, \alpha_{i_0}(u) - \alpha_j(u)\, \alpha'_{i_0}(u)],$$
with $A_j(u):=\sum_i m_{i,j}\, \alpha_i(u)$.
Up to a change of variable, a parametrization of $\Phi_{\mathcal C,\mathcal Q}(\mathcal B)$ is given by $[\theta_1:\theta_2:\theta_3:\theta_4]$
with $\theta_1:=m_{1,4}\psi_1+\psi_4+m_{2,4}\psi_2+m_{3,4}\psi_3$,
$\theta_2:=\psi_2$,
$\theta_3:=\psi_3$ and $\theta_4:=\psi_4$. We have
$$
\theta_1=B_{1,4}\, \alpha'_1+B_{2,4}\, \alpha'_2+B_{3,4}\, \alpha'_3
+B_{2,1}(\alpha'_2\alpha_1-\alpha_2\alpha'_1)+B_{3,1}(\alpha'_3\alpha_1-\alpha_3\alpha'_1)+B_{3,2}(\alpha'_3\alpha_2-\alpha_3\alpha'_2),
$$
with $B_{i,j}:=m_{j,4}A_i-m_{i,4}A_j$.
Observe that $\val B_{i,j}=\val \alpha_1=e$ and so $\val\theta_1=2e-1$.
Moreover $\val\theta_2=r-1$, $\val\theta_3=s-1$ and 
$\val \theta_4=e-1$.
Observe that
Then $\val (\theta_1)=\val(\theta_2)=2e-1$ and
$$
\theta_1(u)=(m_{1,1}-m_{1,4}^2)\alpha_1\alpha'_1\\
+(e+r)(m_{1,2}- m_{1,4}m_{2,4})
   u^{r+e-1}+h_1(u)\, ,
$$
$$\theta_2(u)=-\alpha'_2(u)+m_{1,4}(e-2r) u^{e+r-1}+h_2(u)\, ,$$
$$  {\theta_3(u)}=
     -\alpha'_3(u)+m_{1,4}(e-2s) u^{e+s-1}+h_3(u),$$
with  $\val h_1,\val h_2> r+e-1$ and $\val h_3>s+e-1$.
\begin{itemize}
\item If $e$, $r-e$ and $s-e$ are pairwise distinct,
$\Phi_{\mathcal C,\mathcal Q}(\mathcal B)$  is a branch of type  $(a,b,c)$ with
$\{a,b,c\}=\{e,r-e,s-e\}$ and $a<b<c$.
\item 
Assume now that $r=2e$, i.e. $e=r-e<s-e$ and
\begin{eqnarray*}
\tilde\theta_1(u)&:=&\frac{2}{m_{1,1}-m_{1,4}^2}\theta_1(u)+
   \theta_2(u)\\
    & =&(\alpha_1^2-\alpha_2)'(u)
+6e\frac{m_{1,2}-m_{1,4}m_{2,4}}
   {m_{1,1}-m_{1,4}^2}u^{3e-1}+h(u)
\end{eqnarray*}
with $\val h> 3e-1$ and
$$v_1:=\val (2\theta_1+(m_{1,1}-m_{1,4}^2)\theta_2) =\min(\val(\alpha_1^2-\alpha_2)',3e-1).$$
\begin{itemize}
\item If $v_1\ne s-1$, we conclude that 
$\Phi_{\mathcal C,\mathcal Q}(\mathcal B)$  is a branch of type  $(a,b,c)$ with
$\{a,b,c\}=\{e,v_1-e+1,s-e\}$ and $a<b<c$.
\item If $v_1= s-1<3e-1$, then
$$v_2:=\val(\tilde\theta_1+\gamma \theta_3)=\min(\val(\alpha_1^2-\alpha_2-\gamma\alpha_3)-1,3e-1)>s-1$$
(since $e+s-1>3e-1$).
We conclude that 
$\Phi_{\mathcal C,\mathcal Q}(\mathcal B)$  is a branch of type  
$(e,s-e,\min(2e,v_2-e+1))$.
\item If $v_1= s-1=3e-1$, then $\val \tilde\theta_1 =\val\theta_3 =3e-1
> \val\theta_2=2e-1$ and
\begin{eqnarray*}
\tilde\theta_2&:=&\tilde\theta_1+\left[2\frac {m_{1,2}-m_{1,4}m_{2,4}} {m_{1,1}-m_{1,4}^2}+\gamma-m_{1,4}\right]\theta_3\\
    & =&(\alpha_1^2-\alpha_2-\gamma\alpha_3)'
     +2\frac{m_{1,2}-m_{1,4}m_{2,4}}
     {m_{1,1}-m_{1,4}^2}(\alpha'_1\alpha_2+\alpha_1\alpha'_2-\alpha_3')
     +m_{1,4}(\alpha'_1\alpha_2-2\alpha_1\alpha'_2+\alpha'_3)\\
&\ &\ \ \ +\left[2\frac{m_{1,4}m_{1,2}-m_{1,1}m_{2,4}}
     {m_{1,1}-m_{1,4}^2}-m_{1,1}
        \right]\alpha_1(\alpha'_2\alpha_1-\alpha_2\alpha'_1)
+\left[2\frac{m_{2,2}-m_{2,4}^2}
     {m_{1,1}-m_{1,4}^2}-m_{2,4}\right]\alpha_2\alpha'_2\\
&\ &\ \ \  \ \ \ \ \ \  \ \ \ \ 
  +2\frac{m_{1,3}-m_{1,4}m_{3,4}}
     {m_{1,1}-m_{1,4}^2}(\alpha_3\alpha'_1-\alpha_1\alpha'_3)
    +\tilde h(u),
\end{eqnarray*}
with $\val \tilde h\ge 4e$.
Hence 
$$\val \tilde\theta_2=\min( \val(\alpha_1^2-\alpha_2-\gamma\alpha_3)-1,\val(\alpha_1\alpha_2-2\alpha_3)-1,\val(\alpha'_1\alpha_2-2\alpha'_2\alpha_1+\alpha'_3),4e-1).$$
We conclude that 
$\Phi_{\mathcal C,\mathcal Q}(\mathcal B)$  is a branch of type  
$(e,2e, \val\tilde\theta_2-e+1)$.
\end{itemize}
\item 
Assume that $r-e<s-e=e$. Then $\val \theta_2=r-1<\val\theta_1 =\val\theta_3=2e-1$.
We already now that the type of 
$\Phi_{\mathcal C,\mathcal Q}(\mathcal B)$  starts with $r-e$.
As above, we observe that
\begin{eqnarray*} 
v_3&:=&\val (2\theta_1+(m_{1,1}-m_{1,4}^2)\theta_3)\\
   &=& \min(\val(\alpha_1^2-\alpha_3)',3e-1)    >2e-1
\end{eqnarray*}
and we conclude that 
$\Phi_{\mathcal C,\mathcal Q}(\mathcal B)$  is a branch of type  $(r-e,e,v_3-e+1)$.
\end{itemize}
This ends the proof of Proposition \ref{desing}.
\end{proof}
\subsection{Desingularization via iteration of Halphen maps}
Recall that a branch is smooth if it has type $(1,2,3)$.
Observe that, by generic Halphen transforms, a curve of type $(e,r=e+1,s)$
with $1\ne e<r<s$
becomes a curve of type $(1,e,s_1)$ with $s_1\le\min(s,2e)$
($s_1=s-e$ if $s\ne 2e$). 
Moreover, a curve of type $(1,e,s_1)$ becomes a curve 
of type $(1,2,s_1-e+2)$ in $(e-2)$ steps and a
curve of type $(1,2,s_1-e+2)$ becomes a curve 
of type $(1,2,3)$ in $(s_1-e-1)$ steps. Hence it is desingularized in at most 
$s_1+1$ steps.

Some desingularizations by generic Halphen transforms are summarized in the following scheme on which each arrow corresponds to the Halphen transform for a generic
quadric $\mathcal Q$ of $\mathbb P^3$:
$$
\begin{array}{ccccccc}
(4,5,11)& & & & & &  \\
\downarrow & & & & & &  \\
(1,4,7)&(4,5,10)& & & & &  \\
\downarrow &\downarrow & & &  & & \\
(1,3,6)&(1,4,6)& &(4,5,7) & & &  \\
\downarrow &\downarrow & &\downarrow & & &  \\
(1,2,5)&(1,3,5) &(4,5,6) &(1,2,5)&(2,3,4)&(4,5,9)   &  \\
\searrow &\searrow  & \downarrow&\swarrow &\downarrow  &\downarrow  & \\
  & &  (1,2,4)& &(1,2,4)\mbox{ or }(1,2,3)&(1,3,4) &(3,4,5)    \\
 &   & \searrow & & \swarrow  &\swarrow & \swarrow  \\
& &  &(1,2,3) & & & 
\end{array}
$$
\section{Rank and class of the Halphen transform}\label{sec:rankclass}
\subsection{Proof of the formulas of Theorem \ref{THMDEGRE}}\label{Piene}
Let $\mathcal C$ be a non-planar irreducible curve of $\mathbb P^3$.
To understand Piene's formula for the rank of $\mathcal C$, let
us recall the notion of type of  a branch $\mathcal B$ at $m_0\in\mathcal C$.
We say that the branch $\mathcal B$ has type $(e,r,s)$
if, up to a linear change of coordinates, $m_0[0:0:0:1]$ and
$ \mathcal B$ is parametrized by
\begin{equation}\label{parametrisationbranche1}
\alpha:u\mapsto\left[u^e\sum_{n\ge 0}a_nu^n:u^r\sum_{k\ge 0}b_k u^k:
u^s\sum_{\ell\ge 0}c_\ell u^\ell:1\right],
\end{equation}
with $0<e<r<s$ and $a_0=b_0=c_0=1$.
Recall that $e$ is the multiplicity of $\mathcal B$
and that $s$ is the intersection multiplicity of $\mathcal B$ with its osculating plane.
Then the two first stationnary indices of $\mathcal B$ are given by the following formulas
$$k_0(\mathcal B)=e-1\quad\mbox{and}\quad k_1(\mathcal B)=r-e-1.$$
Observe that, if $\mathcal B$ is smooth and ordinary (since in this case $\mathcal B$
has type $(1,2,3)$), then $k_0( \mathcal B)=k_1(\mathcal B)=0$.

The stationnary indices $k_i(\mathcal C)$ is the sum of the
$k_i(\mathcal B)$ over the set of branches $\mathcal B$
of $\mathcal C$. The $0$-th stationnary index $k_0(\mathcal C)$ corresponds to the number of cusps of $\mathcal C$ (computed with their multiplicities).
The first stationnary index $k_1(\mathcal C)$ corresponds to the number of  (possibly singular) inflection points of $\mathcal C$ (computed with their multiplicities).
In \cite[Example (3.2)]{Piene} (see also \cite[Section 2]{Piene2}), Piene established the following formulas for the rank and the class of $\mathcal C$~:
\begin{equation}\label{formulePienerank}
\rank\mathcal C=2\left[ \deg\mathcal C+g(\mathcal C)-1\right]-k_0(\mathcal C),
\end{equation}
\begin{equation}\label{formulePieneclass}
\class\mathcal C=3\left[ \deg\mathcal C+2 g(\mathcal C)-2\right]-2\, k_0(\mathcal C)-k_1(\mathcal C).
\end{equation}
Let $\mathcal Q$ be a generic quadric of $\mathbb P^3$ such that
$\Phi_{\mathcal C,\mathcal Q}$ is birational and such that $\deg \mathcal C^\mathcal Q=\deg \mathcal C+\rank\mathcal C$. Then $g(\mathcal C^\mathcal Q)=g(\mathcal C)$ and so, using Piene's formulas \eqref{formulePienerank} and  \eqref{formulePieneclass}, we obtain
\begin{eqnarray*}
\rank\mathcal C^\mathcal Q&=&2\left[ \deg\mathcal C^\mathcal Q+g(\mathcal C^\mathcal Q)-1\right]-k_0(\mathcal C^\mathcal Q)\\
&=&2\left[ \deg\mathcal C+\rank\mathcal C+g(\mathcal C)-1\right]-k_0(\mathcal C^\mathcal Q)
\end{eqnarray*}
and
\begin{eqnarray*}
\class\mathcal C^\mathcal Q&=&3\left[ \deg\mathcal C^\mathcal Q+2 g(\mathcal C^\mathcal Q)-2\right]-2\, k_0(\mathcal C^\mathcal Q)-k_1(\mathcal C^\mathcal Q)\\
&=&3\left[ \deg\mathcal C+\rank\mathcal C+2 g(\mathcal C)-2\right]-2\, k_0(\mathcal C^\mathcal Q)-k_1(\mathcal C^\mathcal Q).
\end{eqnarray*}
Now we can use Proposition \ref{desing} to compute $k_0(\mathcal C^\mathcal Q)$ and $k_1(\mathcal C^\mathcal Q)$.
\subsection{Application to examples}
\begin{exa}[Halphen transform of a rational sextic]
Consider the sextic curve $\mathcal C=V(zt^2-x^3-xyt-y^2t, zt-xy+y^2)
\subset \mathbb P^3$ intersection of a non singular cubic $\mathcal K$ with a tangential sphere
$\mathcal S$.
Then
$$\deg\mathcal C=6,\ \ g(\mathcal C)=0,\ \rank(\mathcal C)=7,\ \class\mathcal C=6$$
and
$$\deg\mathcal C^\mathcal Q=13,\ \ g(\mathcal C^\mathcal Q)=0,\ \rank(\mathcal C^\mathcal Q)=24,\ \class\mathcal C^\mathcal Q=32.$$
\end{exa}
\begin{proof}
This curve is the image of the map $\gamma:\mathbb P^1\rightarrow\mathbb P^3$
given by 
$$\gamma([s:t])=\left[-t^2s^4: -\frac{\sqrt{2}}2t^3s^3: \frac{\sqrt{2}}2 t^5s-\frac 12t^6: s^6\right]. $$
Hence
$$\deg\mathcal C=6\quad\mbox{and}\quad g(\mathcal C)=0. $$
The curve $\mathcal C$ has exactly two singular points $P_1[0:0:0:1]$ and $P_2[0:0:1:0]$ and no
nonsingular inflection points. 
The curve $\mathcal C$ admits a single branch $\mathcal B_1$ at $P_1$ parametrized by $\tilde\alpha^{(1)}(t)=\gamma([1:t])$. This branch has type $(2,3,5)$. Hence $k_0(\mathcal B_1)=1$ and $k_1(\mathcal B_1)=0$.
Moreover, due to Proposition \ref{desing}, for a generic quadric $\mathcal Q\subset\mathbb P^3$, the type of $\Phi_{\mathcal C,\mathcal Q}(\mathcal B_1)$ is $(1,2,3)$ and so $k_0(\Phi_{\mathcal C,\mathcal Q}(\mathcal B_1))=k_1(\Phi_{\mathcal C,\mathcal Q}(\mathcal B_1))=0$.

Analogously $\mathcal C$ admits a single branch $\mathcal B_2$ at $P_2$ parametrized by $\tilde\alpha^{(2)}(s)=\gamma([s:1])$ which can be rewritten
$$ \tilde\alpha^{(2)}(s)=\left[\frac 2{1-\sqrt 2s}s^4: \frac{\sqrt{2}}{1-\sqrt 2s}s^3: 1:
       \frac{-2 s^6}{1-\sqrt 2s}\right]$$
and so
$$ \tilde\alpha^{(2)}(s)=\left[2\, s^4\sum_{n\ge 0}\left(\sqrt 2s\right)^n: 
      \sqrt{2}s^3\sum_{n\ge 0}\left(\sqrt 2s\right)^n: 1:
       -2 s^6\sum_{n\ge 0}\left(\sqrt 2s\right)^n\right].$$
Up to a linear change of variable $\tilde\alpha^{(2)}$ can be replaced by the following $\alpha^{(2)}$
fitting the assumptions of our Proposition \ref{desing}~:
$$ \alpha^{(2)}(s)=\left[ s^3\sum_{n\ge 0}\left(\sqrt 2s\right)^n: s^4\sum_{n\ge 0}\left(\sqrt 2s\right)^n:  s^6\sum_{n\ge 0}\left(\sqrt 2s\right)^n : 1\right].$$
In particular $\mathcal B_2$ has type $(3,4,6)$. So $k_0(\mathcal B_2)=2$ and $k_1(\mathcal B_2)=0$.
We observe that we are in the case of a branch of type $(e,r,s)=(3,4,6)$ with $s=3e$ and that
$(\alpha^{(1)}_1(s))^2-\alpha^{(1)}_3(s)= \sqrt{2}s^7+...$ so $\val\left((\alpha^{(1)}_1)^2-\alpha^{(1)}_3\right)=7<9=3e$. Therefore, due to Proposition \ref{desing},  $\Phi_{\mathcal C,\mathcal Q}(\mathcal B_2)$ has type $(1,3,7)$ for a generic quadric $\mathcal Q\subset\mathbb P^3$.
In particular $k_0(\Phi_{\mathcal C,\mathcal Q}(\mathcal B_2))=0$ and $k_1(\Phi_{\mathcal C,\mathcal Q}(\mathcal B_2))=1$.
Hence $k_0(\mathcal C)=k_0(\mathcal B_1)+k_0(\mathcal B_2)=3$,
$k_1(\mathcal C)=k_1(\mathcal B_1)+k_1(\mathcal B_2)=0$,
$k_0(\mathcal C^\mathcal Q)=k_0(\Phi_{\mathcal C,\mathcal Q}(\mathcal B_1))+
k_0(\Phi_{\mathcal C,\mathcal Q}(\mathcal B_2))=0$ and 
$k_1(\mathcal C^\mathcal Q)=k_1(\Phi_{\mathcal C,\mathcal Q}(\mathcal B_1))+
k_1(\Phi_{\mathcal C,\mathcal Q}(\mathcal B_2))=1$.
Using Piene's formulas \eqref{formulePienerank} and \eqref{formulePieneclass}, we obtain that
$$\rank\mathcal C=2\left[ \deg\mathcal C+g(\mathcal C)-1\right]-k_0(\mathcal C)
 =2[6+0-1]-3=7$$
and
$$\class\mathcal C= 3\left[ \deg\mathcal C+2 g(\mathcal C)-2\right]-2k_0(\mathcal C)-k_1( \mathcal C)= 3\cdot 4-2\cdot 3-0=6.$$
Due to Theorem \ref{THMDEGRE}, for a generic quadric $\mathcal Q\subset\mathbb P^3$, we have
$$\deg\mathcal C^\mathcal Q=\deg\mathcal C+\rank\mathcal C=6+7=13,\quad
           g(\mathcal C^\mathcal Q)=g(\mathcal C)=0 ,$$
$$\rank\mathcal C^{\mathcal Q}=2(\deg\mathcal C+\rank \mathcal C+g(\mathcal C)-1)-k_0(\mathcal C^\mathcal Q)=2\cdot 12-0=24
$$
and
$$\class\mathcal C^{\mathcal Q}=3\, \deg\mathcal C+3\, \rank \mathcal C+6\, g(\mathcal C)-6-2\, k_0(\mathcal C^\mathcal Q)-k_1(\mathcal C^\mathcal Q)
=3\cdot 6+3\cdot 7-6-1=32.$$
\end{proof}
\begin{exa}[Halphen transform of a non rational sextic]
Consider the sextic curve $\mathcal C=V(x^2\, z+t\, z^2+y^3,x^2+y^2+z^2-2\, z\, t)
\subset \mathbb P^3$ intersection of a cubic $\mathcal K$ having an E6 singularity with a tangential sphere
$\mathcal S$.
Then
$$\deg\mathcal C=6,\ \ g(\mathcal C)=1,\ \rank \mathcal C=10,$$
and
$$\deg\mathcal C^\mathcal Q=16,\ \ g(\mathcal C^\mathcal Q)=1,\ \rank \mathcal C^\mathcal Q=32.$$
\end{exa}
\begin{proof}
The rational E6 cubic $\mathcal K$ is the image of $f:\mathbb P^2\mapsto \mathbb P^3$
given by
$$f([u:v:w])=[-v^2\, w:-u\, v^2:-v^3:v\, w^2+u^3].$$
Observe that $\mathcal C$ is the image of $\mathcal C'$ by $f$ where
$\mathcal C':=V(3\, v\, w^2+u^2\, v+v^3+2\, u^3)\subset\mathbb P^2$.
Observe also that $\mathcal C=V(x^2+y^2+z^2-2zt,3x^2z+y^2z+z^3+2y^3)\subset\mathbb P^3$
(replacing $x^2z+tz^2+y^3$ by $2[x^2z+tz^2+y^3]+z[x^2+y^2+z^2-2zt]$).

Observe that the point $O[0:0:0:1]$ is the only singular point of $\mathcal C$.
At this point, $\mathcal C$ has a single branch $\mathcal B_1$ 
of multiplicity $3$ with tangent line $V(y,z)$ and of type $(3,4,6)$ parametrized by
$\tilde\alpha^{(1)}$ of the following form
$$\tilde\alpha^{(1)}(t):=\left[\frac{\sqrt{2}}{3^{\frac 14}}
    \left(t^3-\frac 1  {2\sqrt{3}}t^5-
\frac 1 {48}t^7+O(t^9)\right):t^4:\frac 1{\sqrt{3}}t^6+\frac 16t^8+O(t^9):1\right].$$
Recall that $\mathcal P_{\mathcal C,B}$ has equation
$$b_1(2xyz-3xy^2)+b_2(2xz^2-4xzt-x^3)-3b_3xz^2+b_4(3y^2z-3y^2t-yx^2-2yzt)-
b_5(3y^2z+yz^2)-b_6(z^3+z^2t+x^2z).$$
Using the local parametrization $\tilde\alpha^{(1)}$, we obtain
$i_O(\mathcal C,\mathcal P_{\mathcal C, B})=8$ (for a generic $B\in(\mathbb C^6)^\vee$) and so, due to \eqref{degreeformula}, we obtain
$$\rank \mathcal C=6(3+2-2)-8=10.$$
Observe that $k_0(\mathcal C)=3-1=2$.
Combining this with the Piene's formula \eqref{formulePienerank} for the rank, we obtain that 
$$g(\mathcal C)=\frac12[\rank(\mathcal C)+k_0(\mathcal C)]+1-\deg\mathcal C=
         \frac 12[10+2]+1-6=1.$$
Hence, due to due to \eqref{THMDEGRE} and \eqref{degre+classe}, we already know that
$$g(\mathcal C^\mathcal Q)=1\quad\mbox{and}\quad\deg\mathcal C^\mathcal Q=6+10=16$$
for a generic quadric $\mathcal Q\subset\mathbb P^3$.
Up to a linear change of coordinates, we identify $\tilde\alpha^{(1)}$ with $\alpha^{(1)}$ 
satisfying the assumptions of our Proposition \ref{desing}:
$$\alpha^{(1)}(t):=\left[t^3-\frac 1  {2\sqrt{3}}t^5-
\frac 1 {48}t^7+O(t^9):t^4:t^6+\frac 1{2\sqrt{3}}t^8+O(t^9):1\right] $$
Observe that we are in the case of a type $(e,r,s)=(3,4,6)$ with $s=2e$ and 
$\val((\alpha^{(1)}_1)^2-\alpha^{(1)}_3)=8<3e$. So the type of the image branch
is $(r-e,e,8-e)=(1,3,5)$.
Hence $k_0(\mathcal C^\mathcal Q)=0$ and so, due to \eqref{rankHalphen}, we obtain
$$\rank\mathcal C^{\mathcal Q}=2(6+10+1-1)-0=32\, .$$
Observe moreover that if $\mathcal B$ is an inflectional nonsingular branch of $\mathcal C$, then its type is $(1,r,s)$ with $2<r<s$ and then the type of $\Phi_{\mathcal C,\mathcal Q}(\mathcal B)$ is $(1,r-1,s-1)$ (first case in Proposition \ref{desing}) which means that $k_1(\Phi_{\mathcal C,\mathcal Q}(\mathcal B))=k_1(\mathcal B)-1$
whereas $k_1(\Phi_{\mathcal C,\mathcal Q}(\mathcal B))=1+k_1(\mathcal B)=1$. 
\end{proof}
\section{Proof of the birationality}\label{sec:birat}
Observe that Proposition \ref{birationality}  is true if $\mathcal C$ is a line
of $\mathbb P^3$. We will assume from now that $\mathcal C$ is not a line.
The proof leads on the following lemmas.
\begin{lem}
Let $\mathcal C$ be an irreducible curve of $\mathbb P^3$.
Let $m_0$ be a nonsingular point of $\mathcal C$.
Then, for a generic $\mathcal Q$, there exist no $m'\in\mathcal C\setminus\{m_0\}$ such that $\Phi_{\mathcal C,\mathcal Q}(m')
=\Phi_{\mathcal C,\mathcal Q}(m_0)$.
\end{lem}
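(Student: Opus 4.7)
The plan is to bound the dimension of an incidence variety. Let $\mathcal{P}\cong\mathbb{P}^9$ parametrize the quadrics of $\mathbb{P}^3$ and set
$$X:=\bigl\{(\mathcal{Q},m')\in\mathcal{P}\times(\mathcal{C}\setminus\{m_0\}):\Phi_{\mathcal{C},\mathcal{Q}}(m_0)\text{ and }\Phi_{\mathcal{C},\mathcal{Q}}(m')\text{ are both defined and equal}\bigr\}.$$
The lemma follows if $\dim X<9$: the image of the first projection $X\to\mathcal{P}$ is then a proper closed subset, and any quadric outside it yields the conclusion.

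The first observation is that for $(\mathcal{Q},m')\in X$ the common image $p$ must lie on both $\mathcal{T}_{m_0}\mathcal{C}$ and $\mathcal{T}_{m'}\mathcal{C}$, so these two tangent lines intersect. I split on whether they coincide. In the main sub-case $\mathcal{T}_{m_0}\mathcal{C}\ne\mathcal{T}_{m'}\mathcal{C}$ the intersection point $p=p(m')$ is unique, and the two equalities $\Phi_{\mathcal{C},\mathcal{Q}}(m_0)=p$ and $\Phi_{\mathcal{C},\mathcal{Q}}(m')=p$ translate into the linear conditions $\mathfrak{b}_Q(\mathbf{m}_0,\mathbf{p})=0$ and $\mathfrak{b}_Q(\mathbf{m}',\mathbf{p})=0$ on the coefficients of $Q$. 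The associated symmetric tensors $\mathbf{m}_0\otimes\mathbf{p}+\mathbf{p}\otimes\mathbf{m}_0$ and $\mathbf{m}'\otimes\mathbf{p}+\mathbf{p}\otimes\mathbf{m}'$ are linearly independent in $\mathbf{W}\otimes\mathbf{W}$, as any nontrivial linear combination has the form $(a\mathbf{m}_0+b\mathbf{m}')\otimes\mathbf{p}+\mathbf{p}\otimes(a\mathbf{m}_0+b\mathbf{m}')$, which for $\mathbf{p}\ne 0$ vanishes only when $a\mathbf{m}_0+b\mathbf{m}'=0$, impossible as $m_0\ne m'$ in $\mathbb{P}^3$. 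Hence the fiber of $X$ over each such $m'$ has codimension at least $2$ in $\mathcal{P}$, and since $\mathcal{C}$ is one-dimensional we obtain $\dim X\le 1+(9-2)=8$ on this part.

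The remaining sub-case is the bitangent locus $B:=\{m'\ne m_0:\mathcal{T}_{m'}\mathcal{C}=\mathcal{T}_{m_0}\mathcal{C}\}$, which is finite because $\mathcal{C}$ is irreducible and not a line (otherwise the line $\mathcal{T}_{m_0}\mathcal{C}$ would contain infinitely many points of $\mathcal{C}$ and hence equal $\mathcal{C}$). For $m'\in B$ both Halphen images lie on the common tangent line $L$, and for generic $\mathcal{Q}$ the restriction $Q|_L$ is a non-degenerate binary form (i.e.\ $L$ is neither contained in $\mathcal{Q}$ nor tangent to it); the conjugation on $L$ induced by $\mathcal{Q}$ is then an involution, hence injective, so $m_0\ne m'$ forces $\Phi_{\mathcal{C},\mathcal{Q}}(m_0)\ne\Phi_{\mathcal{C},\mathcal{Q}}(m')$. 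The fiber of $X$ over $B$ is thus empty for generic $\mathcal{Q}$, and combining with the main sub-case bound we conclude $\dim X\le 8<9$, as desired.

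The main obstacle is keeping track of the degenerate configurations in which the Halphen map may fail to be defined or behave anomalously. One needs to first restrict to quadrics outside the codimension-$\ge 1$ locus described in Remark~\ref{basepoints} for both $m_0$ and $m'$, and to the codimension-$\ge 1$ locus making $\mathcal{T}_{m_0}\mathcal{C}$ transverse to $\mathcal{Q}$ for each bitangent pair; these are proper closed subsets of $\mathcal{P}$ and do not affect the dimension count. The linear-independence step itself, though often the crux of such arguments, reduces here to the direct tensor computation above.
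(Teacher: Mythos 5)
Your proof is correct, but it takes a genuinely different route from the paper's. The paper normalizes coordinates so that $m_0=[0:0:0:1]$ and $\mathcal T_{m_0}\mathcal C$ has direction $[1:0:0:0]$, computes $\Phi_{\mathcal C,\mathcal Q}(m_0)=[-m_{4,4}:0:0:m_{1,4}]$ explicitly, and rewrites the coincidence $\Phi_{\mathcal C,\mathcal Q}(m')=\Phi_{\mathcal C,\mathcal Q}(m_0)$ as a system: two equations $a\,F_t-F_x=a\,G_t-G_x=0$ (depending on $\mathcal Q$ only through $a=m_{1,4}/m_{4,4}$) cutting out the points of $\mathcal C_1=V(F,G)$ whose tangent line passes through $\Phi_{\mathcal C,\mathcal Q}(m_0)$, plus one plane equation $\mathcal H_{\mathcal Q}\ni m_0$ which, for fixed $a$, can be made an arbitrary plane through $m_0$; failure of the lemma on a dense set of quadrics then forces $\mathcal C\subset V(F_x,F_t,G_x,G_t)$ by irreducibility, hence $\mathcal C$ a line. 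You instead run a dimension count on an incidence variety in $\mathbb P^9\times\mathcal C$: your key observation --- that when the tangent lines at $m_0$ and $m'$ differ, the putative common image is pinned to their intersection point $p$ and the two polarity conditions $\mathfrak b_Q(\mathbf m_0,\mathbf p)=\mathfrak b_Q(\mathbf m',\mathbf p)=0$ are independent linear conditions on $Q$ --- replaces the paper's use of the defining equations $F,G$ and is cleaner and coordinate-free; your separate treatment of the bitangent locus via the involution on the common tangent line is handled only implicitly in the paper. What the paper's more computational setup buys is the explicit resultant description of the bad locus $\mathcal K_{m_0}$, exploited in the following lemma to obtain a degree bound for $\mathcal K_{m_0}$ that is uniform in $m_0$, which is what the final birationality argument actually needs; your argument would have to be supplemented to extract such a bound. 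Two cosmetic points: the image of your (constructible) set $X$ under the first projection is constructible rather than closed, so one should pass to its closure; and the equalities $\Phi_{\mathcal C,\mathcal Q}(m_0)=p$ and $\Phi_{\mathcal C,\mathcal Q}(m')=p$ only \emph{imply} (rather than are equivalent to) the two linear conditions --- but an implication is all that the upper bound on $\dim X$ requires.
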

\begin{proof}
Since Halphen transforms are preserved by linear isomorphisms,
we assume without any loss of generality that $m_0[0:0:0:1]$
and that $t_{m_0}\mathcal C[1:0:0:0]$. Recall that $\mathcal Q=V(Q)$
with $Q$ of the form $Q(m)={}^tm\cdot M\cdot m$ for some symmetric matrix $M=(m_{i,j})_{i,j}$ ($M\cdot m$ corresponds to $\nabla Q/2$). Observe that
$\Phi_{\mathcal C,\mathcal Q}(m_0)=[-m_{4,4}:0:0:m_{1,4}]$.
Let $m_1[x_1:y_1:z_1:t_1]$ be a nonsingular point of $\mathcal C$,
$\Phi_{\mathcal C,\mathcal Q}(m_1)
=\Phi_{\mathcal C,\mathcal Q}(m_0)$ is equivalent to
$\Phi_{\mathcal C,\mathcal Q}(m_0)\in\mathcal T_{m_1}\mathcal C$
and $\Phi_{\mathcal C,\mathcal Q}(m_0)\in\delta_{m_1}\mathcal Q$, i.e. to
\begin{equation}\label{EQUA}
\left\{\begin{array}{c} m_{1,4}F_t(m_1)-m_{4,4}F_x(m_1)=0,\ \ \ 
          m_{1,4}G_t(m_1)-m_{4,4}G_x(m_1)=0\\
x_1(m_{1,4}^2-m_{1,1}m_{4,4})+y_1(m_{1,4}\,m_{2,4}-\,m_{1,2}m_{4,4})
+z_1(m_{1,4}\,m_{3,4}-\,m_{1,3}m_{4,4})=0
\end{array}\right.
\end{equation}
if $\mathcal C$ is contained in a curve $\mathcal C_1=V(F,G)$ of $\mathbb P^3$.
Given a quadric $\mathcal Q$, we write $\mathcal H_{\mathcal Q}$
for the set of $[x_1:y_1:z_1:t_1]\in \mathbb P^3$ satisfying the last
line of \eqref{EQUA}.
Observe that, for any $a\in\mathbb C^*$ and any plane $\mathcal H$ containing $m_0$, there exists $M$ such that $m_{1,4}=a$,
$m_{4,4}=1$ and such that $\mathcal H_{\mathcal Q}=\mathcal H$.

We have to prove that, for a generic $M$, no point of $\mathcal C\setminus\{m_0\}$ is solution of \eqref{EQUA}. 
The contrary would mean that for an infinite number of $a\in\mathbb C^*$, there exists an infinity of planes $\mathcal H$
passing through $m_0$ such that $\mathcal C\setminus\{m_0\}$ intersects $\mathcal H\cap V(a\, F_t-F_x,\ a\, G_t-G_x)$.
Since $\mathcal C$ is irreducible, this would imply that 
$\mathcal C\subset V(a\, F_t-F_x,a\, G_t-G_x)$ for an infinity
of $a$ and so that $\mathcal C\subset V(F_t,F_x,G_t,G_x)$.
This would mean that for every $m\in\mathcal C$, $\nabla F(m)$ and $\nabla G(m)$ are contained in $V(x,t)$. But for a generic
$m\in\mathcal C$, $\nabla F(m)$ and $\nabla G(m)$ are not proportional, so $t_m\mathcal C\in V(y,z)$ and so 
$t_m\mathcal C[1:0:0:0]$. Hence $\mathcal C$ would be the line
$V(x,t)$.
\end{proof}
We reinforce this lemma in the following one.
\begin{lem}
Let $\mathcal C$ be an irreducible curve of $\mathbb P^3$.
There exists $N$ such that
for any nonsingular point $m_0$ of $\mathcal C$,
the set $\mathcal B_{m_0}$ of the quadrics $\mathcal Q$ such that $\#\mathcal C\cap(\Phi_{\mathcal C,\mathcal Q}^{-1}(\{m_0\}))>1$ is contained in an hypersurface $\mathcal K_{m_0}$ of the  $\mathbb P^9$ of quadrics of $\mathbb P^3$, this hypersurface has degree less than $N$.
\end{lem}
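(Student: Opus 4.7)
The plan is to realize $\mathcal{B}_{m_0}$ as a fiber of a single bihomogeneous incidence variety in $\mathbb{P}^3\times\mathbb{P}^9$ whose defining equations have degrees (in $\mathcal{Q}$) bounded uniformly in $m_0$.

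First, I would introduce the incidence
$$\mathcal{W}:=\overline{\bigl\{(m_0,m_1,\mathcal{Q})\in(\mathcal{C}\setminus\sing\mathcal{C})^2\times\mathbb{P}^9:\ m_0\neq m_1,\ \Phi_{\mathcal{C},\mathcal{Q}}(m_0)=\Phi_{\mathcal{C},\mathcal{Q}}(m_1)\bigr\}}\subset\mathbb{P}^3\times\mathbb{P}^3\times\mathbb{P}^9.$$
For fixed $(m_0,m_1)$, the equality condition is the vanishing of the six $2\times 2$ minors of the $4\times 2$ matrix formed by the coordinates of $\Phi_{\mathcal{C},\mathcal{Q}}(m_0)$ and $\Phi_{\mathcal{C},\mathcal{Q}}(m_1)$ in $\mathbb{C}^4$. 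Since each such coordinate is linear in the coefficients of $\mathcal{Q}$ (for fixed $m$) and polynomial of bounded degree in $m$, these minors are tri-homogeneous of tri-degrees bounded by constants depending only on $\mathcal{C}$.

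Next, I would project $\mathcal{W}$ onto its first and third factors. Since $\mathbb{P}^3$ is proper, the image $\mathcal{Y}:=\pi_{13}(\mathcal{W})\subset\mathbb{P}^3\times\mathbb{P}^9$ is Zariski-closed and hence defined by finitely many bihomogeneous polynomials $P_1,\ldots,P_r\in\mathbb{C}[\mathbf{m}_0,\mathbf{Q}]$. Setting $N:=\max_i\deg_{\mathbf{Q}}P_i$ gives an integer depending only on $\mathcal{C}$. By construction, for every $m_0\in\mathcal{C}\setminus\sing\mathcal{C}$ one has $\mathcal{B}_{m_0}\subseteq \mathcal{Y}_{m_0}$, a closed subvariety of $\mathbb{P}^9$ cut out by the polynomials $P_i(m_0,\cdot)$ of degree $\leq N$.

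Finally, to conclude one must check that $\mathcal{Y}_{m_0}\neq\mathbb{P}^9$ for every nonsingular $m_0$: then at least one $P_i(m_0,\cdot)$ is a nonzero homogeneous polynomial of degree $\leq N$, and its zero locus provides the required hypersurface $\mathcal{K}_{m_0}$. The previous lemma yields $\mathcal{B}_{m_0}\neq\mathbb{P}^9$ at any nonsingular $m_0$, so at a generic such $m_0$ we have $\dim\mathcal{Y}_{m_0}\leq 8$ and hence $\dim\mathcal{Y}\leq 9$. Decomposing $\mathcal{Y}$ into irreducible components and using upper semicontinuity of fiber dimension together with the irreducibility of $\mathcal{C}$ and the standing assumption that $\mathcal{C}$ is not a line, any component of $\mathcal{Y}$ either dominates $\mathcal{C}$ (and then has all fibers of dimension at most $8$) or is concentrated over a closed proper subset of $\mathcal{C}$ which, by replaying the incidence argument of the previous lemma at a generic point near $m_0$, must consist only of singular points. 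The main obstacle lies precisely in this last verification: the closure operation defining $\mathcal{W}$ could a priori introduce an extra irreducible component of $\mathcal{Y}$ filling some fiber over an exceptional nonsingular $m_0$, and ruling this out uniformly requires combining the pointwise content of the previous lemma with a careful irreducible-component and dimension analysis.
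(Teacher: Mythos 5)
Your strategy (incidence variety in $\mathbb P^3\times\mathbb P^3\times\mathbb P^9$, projection by properness, Noetherianity giving finitely many bihomogeneous equations whose $\mathbf Q$-degrees bound $N$) is genuinely different from the paper's, which works with explicit iterated resultants $\Res_{z_1}\bigl(z_1^{-1}\Res_{y_1}(K_3,K_4),\Res_{y_1}(K_i,K_{i+2})\bigr)$ built from $F,G$ restricted to a parametrization of the plane $\mathcal H_{\mathcal Q}$; the paper's route yields concrete equations and an effective bound on $N$, while yours is softer and non-constructive. The set-up of your argument is sound: the coordinates of $\Phi_{\mathcal C,\mathcal Q}(m)$ are indeed linear in the coefficients of $\mathcal Q$, the $2\times 2$ minors give trihomogeneous equations of bounded degrees, $\mathcal Y=\pi_{13}(\mathcal W)$ is closed, and $\mathcal B_{m_0}\subseteq\mathcal Y_{m_0}$.

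However, the decisive step --- that $\mathcal Y_{m_0}\neq\mathbb P^9$ for \emph{every} nonsingular $m_0$ --- is exactly the one you leave open, and your sketch of it does not work as written. Upper semicontinuity of fiber dimension goes the wrong way: it permits the fiber of a component dominating $\mathcal C$ to jump over special points, which is precisely what you must exclude, so it cannot give "all fibers of dimension at most $8$". The correct argument is the following. Write $S:=\pi_{13}(\mathcal W^0)$ for the image of the open incidence set, so that $\mathcal Y=\overline S$ (properness of $\pi_{13}$ gives $\pi_{13}(\overline{\mathcal W^0})=\overline{\pi_{13}(\mathcal W^0)}$). The previous lemma, upgraded via constructibility of $S_{m_0}$ (it is the image of a constructible set under a morphism, so "generic $\mathcal Q$ avoids it" forces it into a proper closed subset), gives $\dim S_{m_0}\le 8$ for every nonsingular $m_0$; in particular $\dim S\le 9$. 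Every irreducible component $Z$ of $\overline S$ contains $Z\cap S$ as a dense subset. If $Z$ were equal to $\{m_0\}\times\mathbb P^9$ for some nonsingular $m_0$, then $Z\cap S\subseteq\{m_0\}\times S_{m_0}$ would have dimension $\le 8<9=\dim Z$ and could not be dense in $Z$; and if $Z$ dominates $\mathcal C$, then $Z_{m_0}$ is a proper closed subset of the irreducible $Z$, whence $\dim Z_{m_0}<\dim Z\le 9$. So no fiber of $\mathcal Y$ over a nonsingular point is all of $\mathbb P^9$, some $P_i(m_0,\cdot)$ is a nonzero form of degree $\le N$, and the proof closes. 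As submitted, with this verification only described as "the main obstacle", the proof is incomplete.
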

\begin{proof}
We use the same notations as in the preceding lemma
and we suppose that $F$ and $G$ are irreducible.
Recall that $\mathcal Q=V(Q)$
with $Q$ of the form $Q(m)={}^tm\cdot M\cdot m$ for some symmetric matrix $M=(m_{i,j})_{i,j}$.
Hence we identify the set of quadrics of $\mathbb P^3$ with
$\mathbb P^9$.
Let us consider the following parametrization of $\mathcal H_\mathcal Q$:
\begin{multline*}
\psi(y_1,z_1,t_1):=(y_1(m_{1,2}m_{4,4}-m_{1,4}\,m_{2,4})
+z_1(m_{1,3}m_{4,4}-m_{1,4}\,m_{3,4}),\\
y_1(m_{1,4}^2-m_{1,1}m_{4,4}),z_1(m_{1,4}^2-m_{1,1}m_{4,4}),t_1(m_{1,4}^2-m_{1,1}m_{4,4})).
\end{multline*}
Let us write $K_1:=m_{1,4}F_t\circ\psi-m_{4,4}F_x\circ\psi$,
$K_2:=m_{1,4}G_t\circ\psi-m_{4,4}G_x\circ\psi$,
$K_3:=F\circ\psi$ and $K_4:=G\circ\psi$.
Due to \eqref{EQUA}, $\mathcal B_{m_0}$ 
is contained in the algebraic variety $\mathcal K_{m_0}$ of quadrics
$\mathcal Q$ given by the vanishing of the resultant with respect to $y_1$:
$$\forall i\in\{1,2\},\quad
   \Res_{z_1}\left(z_1^{-1}\Res_{y_1}(K_3,K_4),\Res_{y_1}(K_i,K_ {i+2})\right)=0.$$
Let us show that a generic $\mathcal Q$
is not in $\mathcal K_{m_0}$ (up to a linear change of variables in $(y,z)$).

Since $\mathcal C\ne V(x,y)$, either $\mathcal C\not\subset
V(F_x,F_t)$ or $\mathcal C\not\subset V(G_x,G_t)$.
Assume for example that $\mathcal C\not\subset V(F_x,F_t)$, and so
$V(F)$ is not a plane.
If $V(G)$ is a plane then it is of the form $V(\alpha.x+\beta.t)$.
Hence, for a generic $\mathcal Q$, $V(K_1)$ does not contain a line
and if $V(K_2)$ contains a line then this line has not the form $V(a.z_1+b.t_1)$.

For a generic $\mathcal Q$, 
$\mathcal C\cap\mathcal H_{\mathcal Q}\subset\mathbb P^3$ is finite, so are
$V(K_3,K_4)\subset\mathbb P^2$ 
and $V(\Res_{y_1}(K_3,K_4))\subset\mathbb P^1$.
Moreover, for a generic $\mathcal Q$, $[0:0:1]$ is an ordinary intersection point of $V(K_3)$ with $V(K_4)$ and
$V(K_3, K_4,z_1)=\{[0:0:1]\}$.
Hence $z_1$ divides $\Res_{y_1}(K_3,K_4)$ but not 
$z_1^{-1}\Res_{y_1}(K_3,K_4)$.
The set $\mathcal E_{\mathcal Q}$ 
of $[y_2:z_2:t_2]\in V(K_3)$ such that
$[z_2:t_2]\in V(z_1^{-1}\Res_{y_1}(K_3,K_4))\subset\mathbb P^1$ 
is finite. Since $V(F)\not\subset
V(F_x,F_t)$, 
$ \mathcal E_{\mathcal Q}\cap V(F_x\circ\psi,F_t\circ\psi)=\emptyset$
(up to a linear change of variables in $(y,z)$). Hence for a generic $\mathcal Q$,
$\mathcal E_{\mathcal Q}\cap V(K_1)=\emptyset$
and so 
$\Res_{z_1}(z_1^{-1}
  \Res_{y_1}(K_3,K_4),\Res_{y_1}(K_1,K_3))\ne 0$.
\end{proof}
\begin{proof}[Proof of Proposition \ref{birationality}]
Let us write $\mathcal C_0$ for the set of non singular points
of $\mathcal C$.
Due to the preceding Lemma, the set of quadrics $Q$ such that $\Phi_{\mathcal C,\mathcal Q}$ is not birational is contained in
$$\mathcal K:=\bigcup_{E\subset\mathcal C_0\, :\, \#E<\infty}
   \bigcap_{m\in\mathcal C_0\setminus E}\mathcal K_m,$$
with $\deg \mathcal K_m\le N$.
Now, due to a standard argument 
(see for example \cite{ajfpCRAS}), we conclude that 
either $\mathcal K=\emptyset$ or $\mathcal K$ is contained
in $K_{m_0}$ for some $m_0\in\mathcal C_0$. In any case 
$\mathcal K$ is contained in a subvariety of the set of quadrics
of $\mathbb P^3$.
\end{proof}

\end{document}